\definecolor{Thistle}{rgb}{0.847,0.749,0.847}
\definecolor{Khaki}{rgb}{0.941,0.902,0.549}
\definecolor{Orchid}{rgb}{0.855,0.439,0.839}
\definecolor{MediumOrchid}{rgb}{0.729,0.333,0.827}
\definecolor{brown}{rgb}{0.8,0.5,0}
\definecolor{LightBrown}{rgb}{0.8,0.2,0.4}
\definecolor{DarkGray}{rgb}{0.78,0.78,0.78}
\definecolor{DarkMidGray}{rgb}{0.81,0.81,0.81}
\definecolor{MidGray}{rgb}{0.85,0.85,0.85}
\definecolor{LightGray}{rgb}{0.88,0.88,0.88}
\definecolor{VeryLightGray}{rgb}{0.96,0.96,0.96}
\definecolor{GrayA}{rgb}{0.7,0.7,0.7}
\definecolor{GrayB}{rgb}{0.78,0.78,0.78}
\definecolor{GrayC}{rgb}{0.80,0.80,0.80}
\definecolor{GrayD}{rgb}{0.82,0.82,0.82}
\definecolor{GrayE}{rgb}{0.84,0.84,0.84}
\definecolor{GrayF}{rgb}{0.86,0.86,0.86}
\definecolor{GrayG}{rgb}{0.88,0.88,0.88}
\definecolor{GrayH}{rgb}{0.90,0.90,0.90}
\definecolor{GrayI}{rgb}{0.92,0.92,0.92}
\definecolor{GrayJ}{rgb}{0.94,0.94,0.94}
\definecolor{VeryLightBlue}{rgb}{0.9,0.9,1}
\definecolor{LightBlue}{rgb}{0.8,0.8,1}
\definecolor{MidBlue}{rgb}{0.5,0.5,1}
\definecolor{DarkBlue}{rgb}{0,0,0.6}
\definecolor{Gold}{rgb}{1,0.843,0}
\definecolor{LightGreen}{rgb}{0.88,1,0.88}
\definecolor{MidGreen}{rgb}{0.6,1,0.6}
\definecolor{DarkGreen}{rgb}{0,0.6,0}
\definecolor{VeryLightYellow}{rgb}{1,1,0.9}
\definecolor{LightYellow}{rgb}{1,1,0.6}
\definecolor{MidYellow}{rgb}{1,1,0.5}
\definecolor{DarkYellow}{rgb}{1,1,0.2}
\definecolor{VeryLightRed}{rgb}{1,0.9,0.9}
\definecolor{LightRed}{rgb}{1,0.8,0.8}
\definecolor{MidRed}{rgb}{1,0.55,0.55}
\newtheorem{theorem}{Theorem}
\newtheorem{lemma}[theorem]{Lemma}
\newtheorem{corollary}[theorem]{Corollary}
\newtheorem{definition}{Definition}
\newtheorem{example}{Example}
\newtheorem{remark}{Remark}
\long\def\delete#1{}
\def\qed{\hfill$\Box$\vspace{11pt}}
\def\Xi{X}
\def\PP{{\cal P}}
\def\b0{{\bf 0}}
\def\b{\beta}
\def\d{\delta}
\title{{\bf Hamiltonicity of $3$-arc graphs}}
\author{  Guangjun Xu~and~Sanming Zhou \\ \\
{\small 
Department of Mathematics and Statistics}\\
{\small The University of Melbourne}\\
{\small Parkville, VIC 3010, Australia}\\
{\small E-mail: {\it \{gx, smzhou\}@ms.unimelb.edu.au}}}
\date{\today}
\begin{document}

\maketitle

\begin{abstract}
An arc of a graph is an oriented edge and a 3-arc is a 4-tuple
$(v,u,x,y)$ of vertices such that both $(v,u,x)$ and $(u,x,y)$
are paths of length two. The 3-arc graph of a graph $G$ is 
defined to have vertices the arcs of $G$ such that two arcs
$uv, xy$ are adjacent if and only if $(v,u,x,y)$ is a
3-arc of $G$. We prove that any connected 3-arc graph 
is hamiltonian, and all iterative 3-arc graphs of any connected graph 
of minimum degree at least three are hamiltonian. 
As a corollary we obtain that any vertex-transitive 
graph which is isomorphic to the 3-arc graph of a connected arc-transitive graph of 
degree at least three must be hamiltonian. This confirms the conjecture, for this family of vertex-transitive graphs, that all vertex-transitive graphs with finitely many exceptions are hamiltonian. We also prove that if a graph with at least four vertices is Hamilton-connected, then so are its iterative 3-arc graphs.

\medskip
{\it Key words:}~ $3$-Arc graph, Hamilton cycle, Hamiltonian graph, Hamilton-connected graph, Vertex-transitive graph
% \PACS{PACS code1 \and PACS code2 \and more}
% \subclass{MSC code1 \and MSC code2 \and more}
\end{abstract}

\section{Introduction}
\label{sec:int}

A path or cycle which contains every vertex of a graph is called a {\em Hamilton path} or {\em Hamilton cycle} of the graph. A graph is {\em hamiltonian} if it contains a Hamilton cycle, and is {\em Hamilton-connected} if any two vertices are connected by a Hamilton path. The  hamiltonian problem, that of determining when a graph is hamiltonian, is a classical problem in graph theory with a long history. The reader is referred to \cite{Bondy}, \cite[Chapter 18]{BM}, \cite[Chapter 10]{Diestel} and \cite{Gould} for results on Hamiltonicity of graphs.   

In this paper we present a large family of hamiltonian graphs. Such graphs are defined by means of a graph operator, called the 3-arc graph construction, which bears some similarities with the line graph operator. This construction was first introduced in \cite{Li-Praeger-Zhou98,Zhou99} in studying a family of arc-transitive graphs whose automorphism group contains a subgroup acting imprimitively on the vertex set. (A graph is {\em arc-transitive} if its automorphism group is transitive on the set of oriented edges.) It was used in classifying or characterizing certain families of arc-transitive graphs \cite{Gardiner-Praeger-Zhou99,MPZ,Li-Praeger-Zhou98,LZ,Zhou00c,Zhou98}.  

All graphs in this paper are finite and undirected without loops. We use the term \emph{multigraph} when parallel edges are allowed. An \emph{arc} of a graph $G = (V(G), E(G))$ is an ordered pair of adjacent vertices, or equivalently an oriented edge. For adjacent vertices $u, v$ of $G$, we use $uv$ to denote the arc from $u$ to $v$, $vu$ ($\ne uv$) the arc
from $v$ to $u$, and $\{u, v\}$ the edge between $u$ and $v$. 
A \emph{3-arc} of $G$ is a 4-tuple of vertices $(v, u, x, y)$, possibly with $v = y$, such that both $(v,u,x)$ and $(u,x,y)$ are paths of $G$.   

{\bf Notation:} We follow \cite{BM} for graph-theoretic terminology and notation. The degree of a vertex $v$ in a graph $G$ is denoted by $d(v)$, and the minimum degree of $G$ is denoted by $\d(G)$.  The set of arcs of $G$ with tail $v$ is denoted by $A(v)$, and the set of arcs of $G$ is denoted by $A(G)$.

The general 3-arc construction \cite{Li-Praeger-Zhou98,Zhou99} involves a self-paired subset of the set of 3-arcs of a graph. The following definition is obtained by choosing this subset to be the set of all 3-arcs of the graph.  

\begin{definition}\label{def}
{\em 
Let $G$ be a graph. The 3-arc graph of $G$, denoted by $X(G)$, is defined to have 
vertex set $A(G)$ such that two vertices corresponding to two arcs $uv$ and $xy$ are adjacent if and only if $(v,u,x,y)$ is a $3$-arc of $G$.
}
\end{definition}

It is clear that $X(G)$ is an undirected graph with $2\,|E(G)|$ vertices 
and $\sum_{\{u,v\}\in E(G)}(d(u)-1)(d(v)-1)$ edges.
We can obtain $X(G)$ from the line graph $L(G)$ of $G$ by the following operations \cite{KZ}:
split each vertex $\{u,v\}$ of $L(G)$ into two vertices, namely $uv$ and $vu$; for any two vertices $\{u,v\}, \{x, y\}$ of $L(G)$ that are distance two
apart in $L(G)$, say, $u$ and $x$ are adjacent in $G$, join
$uv$ and $xy$ by an edge. On the other hand, the quotient
graph of $X(G)$ with respect to the partition $\PP =
\{\{uv, vu\}: \{u,v\} \in E(G)\}$ of $A(G)$ is isomorphic to the graph
obtained from the square of $L(G)$ by deleting the edges of
$L(G)$. The reader is referred to \cite{KZ,KXZ,bmg} respectively for results on the diameter and connectivity, the independence, domination and chromatic numbers, and the edge-connectivity and restricted edge-connectivity of 3-arc graphs. 

The following is the first main result in this paper.

\begin{theorem}
\label{th:ham}
Let $G$ be a graph without isolated vertices. The 3-arc graph of $G$ is hamiltonian if and only if 
\begin{itemize}
\item[\rm (a)] $\d(G) \ge 2$;
\item[\rm (b)] no two degree-two vertices of $G$ are adjacent; and 
\item[\rm (c)] the subgraph obtained from $G$ by deleting all degree-two vertices is connected.
\end{itemize}
\end{theorem}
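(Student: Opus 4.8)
The plan is to prove the two implications separately, disposing of necessity first since it is the shorter direction. Throughout write $G^{*}$ for the subgraph obtained from $G$ by deleting all vertices of degree two (the graph in condition (c)). If (a) fails, $G$ has a vertex $w$ of degree one with neighbour $z$, and then the arc $wz$ is an isolated vertex of $X(G)$: any neighbour of it would be an arc with tail in $N_G(w)\setminus\{z\}=\emptyset$; as $|V(X(G))|=2|E(G)|\ge 2$, this rules out a Hamilton cycle. Assuming (a), if (b) fails there are adjacent vertices $a,a'$ of degree two; letting $b$ be the other neighbour of $a$, the neighbours of the arc $ab$ in $X(G)$ are exactly the arcs with tail in $N_G(a)\setminus\{b\}=\{a'\}$, so $ab$ has degree $d(a')-1=1$ in $X(G)$ and again hamiltonicity fails. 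Assuming (a) and (b), if (c) fails then $G^{*}$ is disconnected; if $G$ itself is disconnected then so is $X(G)$, which has at least two edges, so suppose $G$ is connected, fix a component $D$ of $G^{*}$, and let $W=V(D)$ together with every degree-two vertex both of whose neighbours lie in $V(D)$. I would then split $A(G)=V(X(G))$ into $A_1$, consisting of the arcs with tail in $W$ together with, for every degree-two vertex $v$ with one neighbour $p\in V(D)$ and the other neighbour $q\notin V(D)$, the arc $vq$; and $A_2$, consisting of all remaining arcs. Using that $G$ has no edge between $V(D)$ and $V(G^{*})\setminus V(D)$, and that each arc at a degree-two vertex is adjacent in $X(G)$ only to arcs whose tail is that vertex's \emph{other} neighbour, one checks by a short case analysis that no edge of $X(G)$ joins $A_1$ to $A_2$; since both parts are non-empty, $X(G)$ is disconnected, hence not hamiltonian. (The equivalence ``$X(G)$ connected $\iff$ (a)--(c)'' may also be extractable from \cite{KZ}.)

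For sufficiency I would recast a Hamilton cycle of $X(G)$ as a structure on $G$. Let $2G$ be the multigraph obtained by doubling every edge of $G$, so $2G$ is Eulerian whenever $G$ is connected, and an Euler circuit $C$ of $2G$ visits each vertex $v$ exactly $d_G(v)$ times; the $j$-th visit of $v$ has a predecessor and a successor in $C$, both neighbours of $v$, forming what I will call the \emph{forbidden pair} $F_{v,j}\subseteq N_G(v)$ (a one-element set when the visit enters and leaves $v$ along the two copies of a single edge). The key point is: if $C$ can be chosen so that for every vertex $v$ there is a bijection $j\mapsto b_{v,j}$ from the $d_G(v)$ visits of $v$ onto $N_G(v)$ with $b_{v,j}\notin F_{v,j}$ for every $j$, then labelling the $j$-th visit of $v$ by the arc $(v,b_{v,j})$ converts the cyclic list of visits along $C$ into a cyclic list of all $2|E(G)|$ arcs in which consecutive arcs $(v,b),(v',b')$ satisfy $v\sim v'$, $b\ne v'$ and $v\ne b'$, i.e. are adjacent in $X(G)$ — hence a Hamilton cycle. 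So it remains to exhibit such a circuit and such bijections.

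A Hall-type count gives the bijections with almost no effort at the high-degree vertices: if $d_G(v)\ge 4$ then, since each neighbour of $v$ lies in at most two forbidden pairs, Hall's condition for the complements of the $F_{v,j}$ is automatically satisfied; if $d_G(v)=3$ the bijection exists provided the three forbidden pairs at $v$ are not of the form $\{x,y\},\{x,y\},\{z\}$, which one arranges by running $C$ through $v$ in a ``cyclic'' way on the three doubled edges so that the forbidden pairs become $\{x,y\},\{y,z\},\{z,x\}$; and if $d_G(v)=2$ with neighbours $p,q$ the bijection exists exactly when the two visits of $v$ are the bounces $p,v,p$ and $q,v,q$. The problem thus reduces to constructing an Euler circuit of $2G$ that bounces at every degree-two vertex and is cyclic at every degree-three vertex. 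Condition (c) is precisely what makes the bouncing possible: an Euler circuit of $2G$ that bounces at all degree-two vertices is the same thing as an Euler circuit of $2G^{*}$ into which, for each degree-two vertex $v$, the two detours $p,v,p$ and $q,v,q$ have been spliced at a visit of $p$ and at a visit of $q$; by (c) the graph $G^{*}$ is connected, and no vertex of $G^{*}$ can be isolated in $G^{*}$ (else $G^{*}$, hence $G$, would reduce to a single vertex, contradicting $\delta(G)\ge 2$), so $2G^{*}$ is a connected Eulerian multigraph whose Euler circuit visits every vertex of $G^{*}$ — in particular both neighbours of every degree-two vertex — and the splicing can be performed.

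The main obstacle, and the part I expect to require real work, is making all of these requirements mutually consistent with the global demand that $C$ be a single closed circuit. Securing a cyclic transition at one degree-three vertex $v$ in isolation is easy — it amounts to ``smoothing'' $v$ into a triangle on its three neighbours, which preserves both Eulerianness and connectedness of $2G$ — but when two degree-three vertices are adjacent, or when the degree-two detours (which alter the transition at an incident degree-three vertex, so one must for instance avoid splicing a detour into a bounce) are present, these local modifications interact. I would handle this by induction on the number of degree-two vertices: in the base case $\delta(G)\ge 3$ one must produce an Euler circuit of $2G$ with cyclic transitions at every degree-three vertex, which I would obtain by smoothing these vertices one at a time — checking at each step that the graph stays connected and Eulerian — and lifting an Euler circuit back through the smoothings; in the inductive step one re-introduces a degree-two vertex $v$ by splicing its two detours into a good Euler circuit of $2(G-v)$ at visits of its neighbours chosen so that the cyclic patterns at degree-three vertices survive. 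Verifying that these choices can always be made — especially that smoothing a degree-three vertex never disconnects the graph once other such vertices have already been smoothed — is the crux of the argument.
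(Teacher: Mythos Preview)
Your overall architecture matches the paper's: necessity by exhibiting a disconnection of $X(G)$, and sufficiency by taking an Euler tour $C$ of the doubled multigraph $2G$ and matching each visit at $v$ to an arc of $A(v)$ avoiding the visit's forbidden pair, so that consecutive visits become adjacent arcs in $X(G)$. Your Hall analysis (automatic for $d(v)\ge 4$; bounces needed at $d(v)=2$; only the twin-visit pattern $\{x,y\},\{x,y\},\{z\}$ obstructing $d(v)=3$) is exactly the content of the paper's Lemma~\ref{le2}, and your use of condition~(c) to get an Euler tour of $2G$ that bounces at every degree-two vertex is the paper's construction as well. The necessity argument is essentially the paper's, with cosmetic differences in how the separating set of arcs is described.

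The genuine gap is your handling of degree-three vertices. Your plan is to \emph{globally} engineer the tour to be cyclic at every such vertex, by smoothing each one into a triangle and lifting an Euler circuit back. As you yourself flag, this breaks down when degree-three vertices are adjacent: after smoothing $v$ with $N(v)=\{x,y,z\}$, an adjacent degree-three vertex $x$ acquires \emph{four} neighbours in the modified multigraph (single edges to $y,z$ and doubled edges to its other two neighbours), so ``smooth into a triangle'' no longer applies to $x$, and the inductive scheme does not go through. The paper sidesteps all of this with a purely local repair. Start from \emph{any} Euler tour $C$ of $2G$ that bounces at every vertex of $S_2$ (which you already have). If some degree-three vertex $x$ has the bad decomposition $C(x)=\{(x_1,x,x_2),(x_1,x,x_2),(x_3,x,x_3)\}$, apply a \emph{bow-tie} move at $x$: reverse the segment of $C$ between one copy of $(x_1,x,x_2)$ and $(x_3,x,x_3)$. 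The new tour $C'$ satisfies $C'(x)=\{(x_1,x,x_2),(x_1,x,x_3),(x_2,x,x_3)\}$, which admits the required matching, while $C'(w)=C(w)$ (up to orientation) for every $w\ne x$, so the bounces at $S_2$ and the status of every other degree-three vertex are untouched. Choosing $C$ to minimise the number of bad degree-three vertices and applying one bow-tie gives an immediate contradiction; hence a tour with \emph{no} bad vertices exists. This single extremal observation replaces your entire smoothing-and-splicing programme.
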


We remark that Theorem \ref{th:ham} can not be obtained from known results on the hamiltonicity of line graphs, though $X(G)$ and $L(G)$ are closely related as mentioned above. As a matter of fact, even if $L(G)$ is hamiltonian, $X(G)$ is not necessarily hamiltonian, as witnessed by stars $K_{1,t}$ with $t\ge3$.

We define the {\em iterative 3-arc graphs} of $G$ by 
$$
X^1(G) = X(G),\;\; X^{i+1}(G) = X(X^i(G)),\;\; i \ge 1.
$$
Theorem \ref{th:ham} together with \cite[Theorem 2]{KZ} implies the following result. 

\begin{theorem}
\label{cor:ham}
\begin{itemize}
\item[\rm (a)] A 3-arc graph is hamiltonian if and only if it is connected.
\item[\rm (b)] If $G$ is a connected graph with $\d(G) \ge 3$, then $X^{i}(G)$ is hamiltonian for every integer $i \ge 1$.
\end{itemize}
\end{theorem}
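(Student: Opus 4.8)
The plan is to obtain both parts as short consequences of Theorem~\ref{th:ham} together with the connectivity criterion for $3$-arc graphs proved in \cite[Theorem 2]{KZ}, which states that, for a graph $G$ without isolated vertices, $X(G)$ is connected if and only if conditions (a)--(c) of Theorem~\ref{th:ham} hold.

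For part~(a) I would first note that deleting the isolated vertices of $G$ changes neither $X(G)$ nor the assertion, so it suffices to treat graphs without isolated vertices. One direction is immediate, since every hamiltonian graph is connected. For the converse I would use two elementary observations: $X(\cdot)$ commutes with disjoint unions (all four vertices of a $3$-arc lie in one component), so $X(G)$ connected forces $G$ connected; and if $v$ is a vertex of degree one with neighbour $u$, then the arc $vu$ is isolated in $X(G)$, so $X(G)$ connected forces $\d(G)\ge 2$. Hence $X(G)$ connected implies condition~(a), and then \cite[Theorem 2]{KZ} supplies (b) and (c); Theorem~\ref{th:ham} now gives that $X(G)$ is hamiltonian.

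For part~(b) the crucial point is that the $3$-arc operator does not merely preserve, but amplifies, a lower bound on the minimum degree. I would record that for any graph $H$ and any arc $uv$ of $H$,
$$
d_{X(H)}(uv)\;=\;\sum_{x\in N_H(u)\setminus\{v\}}\bigl(d_H(x)-1\bigr)\;\ge\;\bigl(\d(H)-1\bigr)^{2},
$$
so $\d(H)\ge 3$ implies $\d(X(H))\ge 4$. Then I would induct on $i\ge 1$ with the invariant ``$X^{i}(G)$ is hamiltonian (hence connected) with minimum degree at least $3$''. For $i=1$: since $G$ is connected with $\d(G)\ge 3$, it has no isolated vertices and no vertices of degree two, and it coincides with the subgraph obtained from it by deleting its degree-two vertices, so conditions (a)--(c) hold; Theorem~\ref{th:ham} gives that $X(G)$ is hamiltonian, and $\d(X(G))\ge 4$ by the display. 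For the inductive step I would apply exactly the same reasoning to $H:=X^{i}(G)$, which by the invariant is connected with $\d(H)\ge 3$: conditions (a)--(c) hold for $H$, so $X^{i+1}(G)=X(H)$ is hamiltonian and $\d(X^{i+1}(G))\ge 4$. This closes the induction.

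I do not anticipate a genuine difficulty here: the statement is essentially a rewriting of the two ingredients. The one step to carry out with care is the minimum-degree estimate in the display, because it is precisely what ensures that the hypotheses of Theorem~\ref{th:ham} are inherited at every application of the $3$-arc operator, and hence that the induction in part~(b) runs; for part~(a) one must also be careful to cite \cite[Theorem 2]{KZ} in the precise form matching (a)--(c), supplemented by the two easy observations above that reduce ``$X(G)$ connected'' to ``$G$ connected with $\d(G)\ge 2$''.
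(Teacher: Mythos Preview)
Your proposal is correct and follows essentially the same approach as the paper: both parts are derived from Theorem~\ref{th:ham} together with \cite[Theorem~2]{KZ}, and part~(b) proceeds by induction using that $\d(G)\ge 3$ implies $\d(X(G))\ge 3$. The only point to flag is that \cite[Theorem~2]{KZ} is actually stated in terms of an auxiliary graph $\hat{G}$ (obtained by splitting each degree-two vertex) rather than directly as conditions (a)--(c), so the equivalence you invoke requires a short verification---which the paper also performs; your explicit degree estimate $\d(X(H))\ge(\d(H)-1)^2$ is sharper than what the paper records but plays the same role.
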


We will prove Theorems \ref{th:ham} and \ref{cor:ham} in Section \ref{sec:hamil}. In Section \ref{sec:hamil-conn} we will prove the following result.

\begin{theorem}
\label{th:oddlength}
Let $G$ be a 2-edge connected graph with $\d(G) \ge 3$. If $G$ contains a path of odd length between any two distinct vertices, then its 3-arc graph is Hamilton-connected.  
\end{theorem}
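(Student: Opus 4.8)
The plan is to translate the problem into one about Eulerian trails in an auxiliary multigraph on $V(G)$. Fix two distinct arcs $s=(a,b)$ and $t=(c,d)$ of $G$ and set $n=2|E(G)|=|V(X(G))|$. The elementary but crucial observation is this: a sequence of arcs $e_1,\dots,e_n$, written $e_i=(p_i,q_i)$, is a Hamilton path of $X(G)$ from $s$ to $t$ if and only if $p_1p_2\cdots p_n$ is a walk in $G$ in which each vertex $u$ occurs exactly $d(u)$ times, the heads $q_i$ taken over the positions $i$ with $p_i=u$ run through all of $N_G(u)$ (for every $u$), $q_i\notin\{p_{i-1},p_{i+1}\}$ for interior $i$ while $q_1\neq p_2$ and $q_n\neq p_{n-1}$, and $(p_1,q_1)=(a,b)$, $(p_n,q_n)=(c,d)$. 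So the task splits into \textbf{(I)} building the walk $W=p_1\cdots p_n$ with first vertex $a$, last vertex $c$, and multiplicity $d(u)$ at every $u$, and \textbf{(II)} labelling each occurrence by a head satisfying the completeness and forbidden-neighbour constraints.

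For (I), start from the multigraph on $V(G)$ with two parallel copies of every edge of $G$ (all degrees $2d(u)$) and then correct the degrees at $a$ and $c$. If $a\neq c$, choose an odd-length $a$--$c$ path $P$ (it exists by hypothesis) and alter the multiplicities of its edges by $-1,+1,-1,\dots$ in order, starting with $-1$ on the edge at $a$; since $|P|$ is odd this ends with $-1$ on the edge at $c$, keeps all multiplicities in $\{1,2,3\}$, and produces a connected multigraph $H$ with $\deg_H(a)=2d(a)-1$, $\deg_H(c)=2d(c)-1$ and $\deg_H(u)=2d(u)$ otherwise. If $a=c$, a short deduction from the hypothesis shows that $a$ lies on an odd cycle $C$ (take two neighbours $w,w'$ of $a$ and an odd $w$--$w'$ path; splitting off at $a$ one finds an odd cycle through $a$), and altering multiplicities along $C$, with $-1$ on both edges of $C$ at $a$, gives $\deg_H(a)=2d(a)-2$ and $\deg_H(u)=2d(u)$ otherwise. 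In all cases $|E(H)|=n-1$ and $H$ has at most two odd-degree vertices, so it has an Eulerian trail; written out it is a walk of length $n-1$, i.e.\ $n$ vertex-terms, visiting each $u$ exactly $d(u)$ times and running from $a$ to $c$ (an Eulerian circuit based at $a$ when $a=c$). This is $W$.

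For (II), for each vertex $u$ we need a system of distinct representatives for the $d(u)$ occurrences of $u$ in $W$ over $N_G(u)$, occurrence $i$ being allowed $N_G(u)\setminus\{p_{i-1},p_{i+1}\}$, with the heads at positions $1$ and $n$ pinned to $b$ and $d$. Using $\delta(G)\geq 3$, Hall's condition is immediate whenever $d(u)\geq 4$, or whenever $u$ meets none of the perturbation path/cycle; the delicate cases are degree-$3$ vertices, where a single bad local pattern of $W$ (two ``straight'' passages through the same pair of neighbours together with a ``backtracking'' passage on the third neighbour — and, on a multiplicity-$3$ edge, the passages failing to include a backtrack) would block feasibility, together with the analogous patterns at $a$ or $c$ after the heads there are fixed to $b$ or $d$. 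The remedy is to choose the Eulerian trail of $H$ carefully: route $W$ through a ``rotational'' transition at each degree-$3$ vertex instead of the bad one (forcing a backtracking passage on any multiplicity-$3$ edge), and control the first and last edges of $W$ — and, when $a=c$, the transition at the base point — so that $b$ and $d$ remain available for their pinned occurrences. Here the $2$-edge-connectedness of $G$, hence of $H$, is what permits imposing these local transition choices while keeping the trail a single connected Eulerian trail (via standard transition-system / splitting-off arguments). Once $W$ avoids all bad patterns, Hall's theorem yields the per-vertex SDRs, and since distinct tails keep the resulting arcs distinct, they combine into the desired labelling, completing the proof.

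The main obstacle is precisely this coordination in (II): producing a single Eulerian trail of $H$ that simultaneously avoids the bad local pattern at every (possibly many) degree-$3$ vertex, realizes the prescribed first/last edges and base-point transition, and remains connected. Everything else — the reformulation, the construction of $H$ from an odd path or odd cycle, and the Hall-type verification once $W$ is good — is routine; a handful of very small instances (near $K_4$) may have to be verified directly if the general routing argument needs $G$ to be large enough.
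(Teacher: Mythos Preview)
Your plan is essentially the paper's own strategy: the same reformulation (Hamilton path in $X(G)$ $\leftrightarrow$ Eulerian trail in an auxiliary multigraph on $V(G)$ together with, at each vertex, a system of distinct representatives obtained from Hall's theorem), and the same auxiliary multigraph (double every edge, then adjust multiplicities by $\pm 1$ along a shortest odd $a$--$c$ path so that the degrees become $2d(a)-1$, $2d(c)-1$, and $2d(u)$ elsewhere). The paper splits into the cases $a\neq c$ and $a=c$ exactly as you do; for $a=c$ it works with an odd $b$--$d$ path rather than an odd cycle through $a$, but your variant is fine once you actually justify that $a$ lies on an odd cycle (your one-line ``splitting off at $a$'' is too quick; use instead that $G$ is $2$-edge connected and non-bipartite, so the block of $G$ containing $a$ is $2$-connected and non-bipartite, hence every edge at $a$ lies on an odd cycle).

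Where your proposal diverges from the paper is precisely at the point you flag as the main obstacle, and here it is too optimistic. You say the bad local patterns at degree-$3$ vertices and at the two endpoints can be avoided simultaneously by ``standard transition-system / splitting-off arguments'' using the $2$-edge-connectedness of $H$. There is no off-the-shelf theorem that does this for you: Fleischner-type results on compatible Eulerian trails forbid one transition per vertex, whereas here you must avoid a specific \emph{visit-decomposition} (two twin visits plus a backtrack) at every degree-$3$ vertex, and in addition pin the heads $b,d$ at the first and last positions. The paper does not invoke a global theorem; instead it fixes the bad vertices one at a time by explicit local surgeries it calls the \emph{bow-tie} and \emph{concatenation} operations (swap two visits $(x_1,x,x_2),(x_3,x,x_4)$ for $(x_1,x,x_3),(x_2,x,x_4)$ while reversing a segment), argues via a minimality argument on the set $Z(C)$ of bad vertices that all can be eliminated without disturbing the others, and then carries out a substantial case analysis at the endpoints $a$ and $c$ (their Claims~1--3) to accommodate the pinned heads, including the situations $b=x_1$ or $d=x_{l-1}$ and $d(a)\in\{3,4,5\}$. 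This is several pages of work and is the real content of the proof; your sketch would need either to reproduce it or to name a precise compatible-Euler-trail theorem that genuinely covers the endpoint constraints. As written, that step is a gap rather than a routine citation.
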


A basic strategy in the proof of Theorems \ref{th:ham} and \ref{th:oddlength} is to find an Eulerian tour or an open Eulerian trail in a properly defined multigraph that produces the required Hamilton cycle or path. This is similar to the observation \cite{chart} that an Eulerian tour of a graph produces a Hamilton cycle of its line graph. 
  
Theorem \ref{th:oddlength} implies the following result. 

\begin{theorem}
\label{th:ham-c}
If a graph $G$ with at least four vertices is Hamilton-connected, then so are its iterative 3-arc graphs $X^{i}(G)$, $i \ge 1$.
\end{theorem}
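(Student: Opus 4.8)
The plan is to deduce Theorem \ref{th:ham-c} from Theorem \ref{th:oddlength} by an inductive argument: it suffices to show that if $G$ is Hamilton-connected on at least four vertices, then $X(G)$ satisfies the hypotheses of Theorem \ref{th:oddlength}, and moreover $X(G)$ is again Hamilton-connected on at least four vertices so that the induction may be iterated. Since $X(G)$ has $2|E(G)|$ vertices and a Hamilton-connected graph on $\ge 4$ vertices has at least $4$ edges (indeed it is $3$-connected and hence has $\ge \lceil 3n/2 \rceil \ge 6$ edges), the vertex-count condition is automatic and propagates. So the real content is: a Hamilton-connected graph $G$ on $\ge 4$ vertices is $2$-edge connected, has $\d(G) \ge 3$, and contains a path of odd length between any two of its vertices; then Theorem \ref{th:oddlength} gives that $X(G)$ is Hamilton-connected, and we repeat.

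First I would verify the structural hypotheses on $G$. Hamilton-connectivity forces $G$ to be $3$-connected (Chvátal--Erd\H{o}s type reasoning, or directly: a Hamilton path between the two neighbours of a cut vertex would have to pass through the cut vertex twice), hence $2$-edge connected and of minimum degree at least $3$, provided $|V(G)| \ge 4$ so that $3$-connectivity is meaningful; one checks the small cases $K_4, K_5$ directly. Next, the odd-path condition: given distinct $u, v \in V(G)$, a Hamilton path $P$ from $u$ to $v$ has length $n-1$; if $n$ is even this already has odd length and we are done, so assume $n$ is odd. Pick any vertex $w$ adjacent to both endpoints of some sub-path, or more simply: take a Hamilton path $P'$ from $u$ to $w$ for a suitable neighbour $w$ of $v$ with $w \ne u$ (possible since $d(v) \ge 3 > 1$, and we may choose $w$ not equal to $u$); then $P' $ followed by the edge $wv$, if $v \notin P'$... but $v \in P'$. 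Instead: let $w$ be a neighbour of $v$; the Hamilton path from $u$ to $w$ has length $n-1$ (even), and removing from it the vertex $v$ splits it — this needs care. The cleanest route is: since $G$ is Hamilton-connected with $n \ge 4$, $G$ has a Hamilton cycle $C$; for distinct $u,v$, the cycle $C$ gives two $u$–$v$ paths whose lengths sum to $n$, so if $n$ is odd exactly one of them is odd and we are done; if $n$ is even, one of these paths has even length $2k$, and I would shortcut it using a chord (which exists since $\d(G)\ge 3 > 2$) to change the parity, or use a Hamilton path between $u$ and $v$ (length $n-1$, odd). The latter is immediate: a Hamilton path between $u$ and $v$ has length $n-1$, which is odd exactly when $n$ is even — so combining ``Hamilton cycle route when $n$ odd'' with ``Hamilton path route when $n$ even'' settles the odd-path condition in all cases.

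Having established that $G$ meets all hypotheses of Theorem \ref{th:oddlength}, we conclude $X(G)$ is Hamilton-connected. To iterate, I must re-verify the same hypotheses for $X(G)$: the vertex count is $\ge 4$ as noted, and Hamilton-connectivity of $X(G)$ re-supplies $2$-edge connectivity, $\d(X(G)) \ge 3$, and the odd-path property by the identical argument applied to $X(G)$ in place of $G$. Thus by induction on $i$, $X^i(G)$ is Hamilton-connected for all $i \ge 1$.

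The main obstacle I anticipate is the odd-path condition — specifically making sure that \emph{between every pair} of vertices (not just some pair, and not just realised by a Hamilton path of a single fixed parity) there is an odd path, handling the parity of $n$ uniformly. The resolution above — use a Hamilton path (length $n-1$) when $n$ is even, and use one of the two arcs of a Hamilton cycle (lengths summing to the odd number $n$) when $n$ is odd — covers both parities cleanly and, crucially, the same dichotomy applies verbatim to $X(G)$, so nothing special about $G$ beyond Hamilton-connectivity and $|V| \ge 4$ is ever used, which is exactly what makes the induction go through.
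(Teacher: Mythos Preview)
Your proposal is correct and follows the same overall strategy as the paper: verify that a Hamilton-connected graph on at least four vertices is $2$-edge connected with $\delta\ge 3$ and has an odd path between any two vertices, invoke Theorem~\ref{th:oddlength} to conclude that $X(G)$ is Hamilton-connected, and iterate.

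The one genuine difference is in how you establish the odd-path property. The paper (Lemma~\ref{le:ham-c}) takes a Hamilton $u$--$v$ path of even length, two-colours its vertices by parity of position, and uses the fact that no bipartite graph other than $K_2$ is Hamilton-connected to locate a chord joining two vertices of the same colour, thereby shortcutting to an odd path. Your argument instead splits on the parity of $|V(G)|$: when $|V(G)|$ is even the Hamilton $u$--$v$ path itself has odd length, and when $|V(G)|$ is odd the two $u$--$v$ arcs of a Hamilton cycle have lengths summing to the odd number $|V(G)|$, so one of them is odd. Both arguments are short; yours avoids any appeal to non-bipartiteness and is arguably cleaner, while the paper's argument has the minor advantage of not needing to observe separately that Hamilton-connected graphs are hamiltonian. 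Either way, the iteration then proceeds identically.
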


Given vertex-disjoint graphs $G$ and $H$, the join $G\vee H$ of them is the graph with vertex set $V(G) \cup V(H)$ and edge set $E(G) \cup E(H) \cup \{\{u, v\}: u \in V(G), v \in V(H)\}$. Theorem \ref{th:oddlength} implies the following result. 
 
\begin{corollary}
\label{le:ham-cj}
Let $G$ and $H$ be graphs such that $\max\{\d(G), \d(H)\} \ge 2$. Then $X(G \vee H)$ is Hamilton-connected.
\end{corollary}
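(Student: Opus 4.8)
The plan is to deduce this from Theorem \ref{th:oddlength}. Set $\Gamma = G \vee H$. I must check three things about $\Gamma$: that it is $2$-edge connected, that $\delta(\Gamma) \ge 3$, and that any two distinct vertices of $\Gamma$ are joined by a path of odd length. Once these are verified, Theorem \ref{th:oddlength} gives that $X(\Gamma)$ is Hamilton-connected, which is exactly the conclusion.

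First I would dispose of a degenerate case. If $\max\{\delta(G), \delta(H)\} \ge 2$, then (relabelling if necessary) $\delta(G) \ge 2$; in particular $G$ has at least three vertices, so $|V(G)| \ge 3$ and $|V(H)| \ge 1$. For the degree bound, a vertex $v \in V(G)$ has degree $d_G(v) + |V(H)| \ge 2 + 1 = 3$ in $\Gamma$, and a vertex $w \in V(H)$ has degree $d_H(w) + |V(G)| \ge 0 + 3 = 3$ in $\Gamma$; hence $\delta(\Gamma) \ge 3$. For $2$-edge connectivity: $\Gamma$ is connected since every vertex of $H$ is joined to every vertex of $G$, and it has no bridge because deleting any edge still leaves $\delta \ge 2$ with every pair of vertices on opposite sides of the join connected through the remaining cross-edges — more simply, a graph of minimum degree $\ge 3$ obtained as a join is easily seen to be $2$-edge connected (any edge cut separating the vertex set must cut at least $\min\{|V(G)|,|V(H)|\} \ge 1$ cross-edges plus further edges, and the small side, having $\le 2$ vertices forces one side to be a single vertex of degree $\ge 3$).

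The heart of the argument is the odd-path condition. Given distinct vertices $a, b$ of $\Gamma$, I want a walk — in fact a path — of odd length from $a$ to $b$; note that in a join there is enormous freedom. If $a \in V(G)$ and $b \in V(H)$ (or vice versa), the single cross-edge $ab$ is a path of length $1$. If $a, b$ are both in $V(G)$, pick any $w \in V(H)$ (using $|V(H)| \ge 1$) and take the path $a, w, b$ of length $2$; to get odd length instead, pick two distinct vertices $w_1, w_2 \in V(H)$ if $|V(H)| \ge 2$ and use $a, w_1, c, w_2, b$ for some third vertex $c \in V(G)$ (using $|V(G)| \ge 3$), of length $4$ — wait, that is still even. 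The point is that $a$ and $b$ both lie in the same part, so every path between them alternates sides an even number of times unless it uses an internal edge of $G$ or $H$. Since $\delta(G) \ge 2$, $G$ itself contains a path between $a$ and $b$ (as $G$ is connected when $\delta(G)\ge 2$ — or if $G$ is disconnected, route through $H$), and by adjusting: take a path $P$ from $a$ to $b$ inside $\Gamma$; if it has even length, insert a detour $\cdots x, w, y \cdots \to \cdots x, w, z, w', y\cdots$ using fresh vertices, changing the parity. Concretely, I would argue that because $|V(G)| \ge 3$ and $|V(H)| \ge 1$, $\Gamma$ contains a cycle of each length from $3$ up to $|V(\Gamma)|$ through suitable vertices, and in particular $\Gamma$ contains both an even and an odd cycle through any prescribed vertex; combined with connectivity this yields paths of both parities between any two vertices. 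The main obstacle — and the step needing genuine care rather than one-line observations — is producing the odd-length path when $a$ and $b$ lie in the same part and that part (say $H$) has no internal edges at all, i.e. $\delta(H) = 0$; then one must route through $G$ and exploit $\delta(G)\ge 2$ (hence an internal edge of $G$, which flips parity) to build the odd path $a, w_1, x, y, w_2, b$ with $w_1,w_2 \in V(H)$ distinct and $x,y\in V(G)$ adjacent, of length $5$. This requires $|V(H)| \ge 2$; if $|V(H)| = 1$ then $a = b$ is impossible, so that sub-case is vacuous. Assembling these cases completes the verification of the hypotheses of Theorem \ref{th:oddlength}, and the corollary follows. \qed
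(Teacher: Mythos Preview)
Your overall plan---verify that $\Gamma = G \vee H$ satisfies the hypotheses of Theorem~\ref{th:oddlength}---is exactly the route the paper intends (the paper gives no proof beyond saying the corollary follows from Theorem~\ref{th:oddlength}). The checks that $\delta(\Gamma) \ge 3$ and that $\Gamma$ is $2$-edge connected are essentially fine, if loosely written.

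The odd-path verification, however, contains a genuine error and a gap. In the case you single out---$a,b \in V(H)$ with $H$ having no internal edges---your proposed path $a, w_1, x, y, w_2, b$ with $w_1, w_2 \in V(H)$ cannot exist: its first edge $\{a, w_1\}$ would lie inside $H$, which you have just assumed is edgeless. The correct construction is much shorter: since $\delta(G) \ge 2$, $G$ contains an edge $\{x,y\}$, and then $a, x, y, b$ is a path of length $3$ (all four vertices are distinct and all three edges are present in the join). You also never actually finish the case $a, b \in V(G)$; your paragraph cycles through several false starts and an unproved pancyclicity claim without reaching a conclusion. Here the clean argument is: pick $c \in N_G(a)$ (which has size at least $2$); if $c = b$ the edge $\{a,b\}$ is a path of length $1$, and if $c \ne b$ then for any $w \in V(H)$ the sequence $a, c, w, b$ is a path of length $3$. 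With these two three-line constructions in place of the muddled paragraph, the argument goes through. (Two minor slips worth fixing: $\delta(G) \ge 2$ does not force $G$ to be connected, and ``$\delta(H)=0$'' is not the same as ``$H$ has no edges''.)
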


In the case when $G$ has a large order but small maximum degree, $X(G)$ has a large order but relatively small maximum degree. In this case the Hamiltonicity of $X(G)$ may not be derived from known sufficient conditions for Hamilton cycles such as the degree conditions in the classical Dirac's or Ore's Theorem (see \cite{Bondy,BM,Diestel,Gould}).  

In spirit, Theorems \ref{th:ham} and \ref{cor:ham} are parallel to the well-known conjecture of Thomassen \cite{Thom} which asserts that every 4-connected line graph is hamiltonian. This conjecture is still open; see \cite{CLXYZ,Gould,HTW,LLZ,Zhan05}. In contrast, Theorem \ref{th:ham} solves the hamiltonian problem for 3-arc graphs completely.  

A well-known conjecture due to Lov\'asz, formulated by Thomassen \cite{Thom1}, asserts that all connected vertex-transitive graphs, with finitely many exceptions, are hamiltonian. Since the 3-arc graph of an arc-transitive graph is vertex-transitive, Theorem \ref{cor:ham} implies the following result, which confirms this conjecture for a large family of vertex-transitive graphs. (The family of arc-transitive graphs is large from a group-theoretic point of view \cite{Praeger}.)
  
\begin{corollary}
\label{cor:vt}
If a vertex-transitive graph is isomorphic to the 3-arc graph of a connected arc-transitive graph of degree at least three, then it is hamiltonian. 
\end{corollary}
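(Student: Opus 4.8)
The plan is to reduce the statement directly to Theorem~\ref{cor:ham}(b). Suppose $\Gamma$ is a vertex-transitive graph with $\Gamma \cong X(G)$ for some connected arc-transitive graph $G$ of degree at least three. First I would check that $G$ satisfies the hypotheses of Theorem~\ref{cor:ham}(b), namely that $G$ is connected with $\d(G) \ge 3$. Connectedness is given. For the degree condition, recall that an arc-transitive graph possessing at least one edge is vertex-transitive: since $\Aut(G)$ acts transitively on the arcs of $G$, it acts transitively on the set of vertices incident with some edge, and because $G$ is connected of positive degree that set is all of $V(G)$. A vertex-transitive graph is regular, so every vertex of $G$ has the same degree, which by hypothesis is at least three; hence $\d(G) \ge 3$.

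Once these facts are in place, Theorem~\ref{cor:ham}(b) (applied with $i = 1$) says that $X^1(G) = X(G)$ is hamiltonian, and since Hamiltonicity is an isomorphism invariant it follows that $\Gamma \cong X(G)$ is hamiltonian. There is no real obstacle in this argument: the only substantive ingredient is Theorem~\ref{cor:ham}(b), and the remaining steps merely translate the group-theoretic hypotheses on $G$ into the combinatorial conditions (a)--(c) of Theorem~\ref{th:ham} (with $G$ regular of degree at least three, (a) holds trivially, there are no degree-two vertices so (b) is vacuous, and deleting degree-two vertices leaves $G$ itself, which is connected, so (c) holds).

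I would also point out, for context, that the vertex-transitivity assumption on $\Gamma$ is automatic here: when $G$ is arc-transitive the natural action of $\Aut(G)$ on $A(G) = V(X(G))$ sends $3$-arcs to $3$-arcs, hence preserves adjacency in $X(G)$, and is transitive precisely because $G$ is arc-transitive, so $X(G)$ is vertex-transitive. Keeping the hypothesis in the statement simply makes explicit the connection with the Lov\'asz conjecture that the corollary is intended to address.
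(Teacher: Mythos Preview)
Your proposal is correct and follows the same route as the paper, which derives the corollary directly from Theorem~\ref{cor:ham}(b); your only addition is to spell out explicitly why a connected arc-transitive graph of degree at least three has $\d(G)\ge 3$ and why the vertex-transitivity of $X(G)$ is automatic, details the paper leaves implicit.
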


The Lov\'asz conjecture has been confirmed for several families of vertex-transitive graphs \cite{KM}, including connected vertex-transitive graphs of order $kp$, where $k \le 4$, (except for the Petersen graph and the Coxeter graph) of order $p^j$, where $j \le 4$, and of order $2p^2$, where $p$ is prime, and some families of Cayley graphs. Tools from group theory were used in the proof of almost all these results. Corollary \ref{cor:vt} has a different flavour and its proof does not rely on group theory. 

There has also been considerable interest on Hamilton-connectedness of vertex-transitive graphs. Theorem \ref{th:ham-c} implies that if a vertex-transitive graph (with at least four vertices) is Hamilton-connected, then so are its iterative 3-arc graphs. For example, it is known that every connected non-bipartite Cayley graph of degree at least three on a finite abelian group \cite{CQ} or a Hamiltonian group \cite{AQ} is Hamilton-connected. (A finite non-abelian group in which every subgroup is normal is called a Hamiltonian group.) From this and Theorem \ref{th:ham-c} we know immediately that all iterative 3-arc graphs of such a Cayley graph are also Hamilton-connected. 
%=======================================================================================
\section{Preliminaries}
\label{sec:pre}
Let  $G^*$ be a multigraph. A {\em walk} in $G^*$ of length $l$ is a sequence $v_0, e_1, v_1,$ $\ldots, v_{l-1}, e_l, v_l$, whose terms are alternately vertices and edges of $G^*$ (not necessarily distinct), such that $v_{i-1}$ and $v_i$ are the end-vertices of $e_i$, $1 \le i \le l$.  
A walk is {\em closed} if its initial and terminal vertices are identical, is a {\em trail} if all its edges are distinct, and is a {\em path} if all its vertices are distinct. Often we present a trail by listing its sequence of vertices only, with the understanding that the edges used are distinct.  
A trail that traverses every edge of $G^*$ is called an {\em Eulerian trail} of $G^*$, and a closed Eulerian trail is called an {\em Eulerian tour}. A multigraph is {\em Eulerian} if it admits an Eulerian tour. It is well known that a multigraph is Eulerian if and only if all its vertices have even degrees. 
  
A {\em 2-trail} of $G^*$ is a trail of length two (and so is a path or cycle of length two). We call a 2-trail $(u,x,v)$ with mid-vertex $x$ a {\em visit to $x$} (if $u = v$, then $(u,x,u)$ is thought as entering and leaving $x$ on parallel edges). When there is no need to make distinction between $(u,x,v)$ and $(v,x,u)$, or the orientation of the visit is unknown, we write $[u,x,v]$. Two visits $(u,x,v)$ and $(u',x,v')$ are called {\em twin visits} if $\{u, v\} = \{u', v'\}$ and the four edges involved are distinct. In particular, when $u = v$, two twin visits $(u,x,u)$ and $(u,x,u)$ use four parallel edges between $u$ and $x$.  

Denote by $E^*(x)$ the set of edges of $G^*$ incident with $x \in V(G^*)$, and $d^*(x) = |E^*(x)|$ the degree of $x$ in $G^*$. In the case when $d^*(x)$ is even, a decomposition of $E^*(x)$ into a set of visits to $x$ is called a {\em visit-decomposition} of $E^*(x)$ (at $x$). In this definition the orientations of the visits in the decomposition are not important in our subsequent discussion. So we may view each visit $(u,x,v)$ in such a visit-decomposition as a non-oriented path (if $u \ne v$) or cycle (if $u = v$) of length two. As an example, if $E^*(x)=\{\{x,y\}, \{x,y\},  \{x,z\}, \{x,z\}\}$, where $\{x,y\}$ and $\{x,y\}$ are viewed as distinct edges between $x$ and $y$, then both $\{[y,x,y], [z,x,z]\}$  and  $\{[y,x,z], [y,x,z]\}$ are visit-decompositions of $E^*(x)$.

\begin{definition}
\label{def:h}
Given a visit-decomposition $J(x)$ of $E^*(x)$, define $H(x)$ to be the bipartite graph 
with vertex bipartition $\{J(x), A(x)\}$ such that $p\in J(x)$ and $xy\in A(x)$ are adjacent if and only if $y$ is not in $p$, where $A(x)$ is the set of arcs of the underlying simple graph of $G^*$ with tail $x$.  
\end{definition}

We emphasize that $H(x)$ relies on $J(x)$. One can verify the following result by using Hall's marriage theorem. 

\begin{lemma}
\label{le2}
Suppose $x$ is a vertex of $G^*$ such that $d^*(x) \geq 6$ is even and either $x$ is joined to every neighbour of $x$ by exactly two parallel edges, or $x$ is joined to one of its neighbours by exactly three parallel edges, another neighbour by a single edge, and each of the remaining neighbours by exactly two parallel edges. Let $J(x)$ be a visit-decomposition of $E^*(x)$. Then the bipartite graph $H(x)$ with respect to $J(x)$ has no perfect matchings if and only if $d^*(x)=6$ and $J(x)$ contains two twin visits.  
\end{lemma}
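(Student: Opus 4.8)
The plan is to analyze the bipartite graph $H(x)$ via Hall's marriage theorem, treating $J(x)$ as the side from which we seek a matching saturating all visits. Write $d^*(x)=2k$, so $|J(x)| = k$ and $|A(x)|$ equals the number $m$ of distinct neighbours of $x$; in the first case $2k = 2m$ so $k = m$, and in the second case $2k = 3 + 1 + 2(m-2) = 2m$ so again $k = m$. Thus $H(x)$ is balanced and a matching saturating $J(x)$ is automatically perfect. A visit $p = [u,x,v]$ with $u \ne v$ is non-adjacent only to the arcs $xu$ and $xv$, hence has degree $m-2$ in $H(x)$; a visit $p = [u,x,u]$ (two parallel edges to $u$) is non-adjacent only to $xu$, hence has degree $m-1$. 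So every vertex of $J(x)$ has degree at least $m-2 \ge 1$ when $m \ge 3$, i.e. when $d^*(x)\ge 6$.

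For the ``if'' direction: suppose $d^*(x) = 6$ and $J(x)$ contains two twin visits $[u,x,v], [u,x,v]$. Then $k = m = 3$, so $J(x)$ consists of exactly these two twin visits plus one more visit $p_3$. The two twin visits are each non-adjacent to exactly $\{xu, xv\}$, so their common neighbourhood in $A(x)$ is contained in the single remaining arc (the third neighbour $w$, so $xw$) — in fact $N(\{$twins$\}) = \{xw\}$ if $u\ne v$, or is empty if $u = v$. Either way $|N(S)| \le 1 < 2 = |S|$ for $S$ the set of the two twin visits, violating Hall's condition; hence no perfect matching.

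For the ``only if'' direction one shows: if $d^*(x) \ge 8$, or if $d^*(x)=6$ and $J(x)$ has no twin visits, then Hall's condition holds. When $d^*(x) \ge 8$ we have $m \ge 4$, so each vertex of $J(x)$ has degree $\ge m-2 \ge 2$, and one checks $|N(S)| \ge |S|$ for every $S \subseteq J(x)$: a single visit already sees $\ge m-2$ arcs; two visits together fail to see at most $\min(4, \text{number of distinct endpoints})$ arcs but $|N(S)| \ge m - 4 \ge 0$ and more carefully $\ge |S|$ because distinct visits in a decomposition partition the edges, limiting how many arcs two visits can jointly exclude; and any $S$ with $|S| \ge 3$ sees essentially all of $A(x)$. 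The remaining case $d^*(x) = 6$, $m=3$, no twin visits: the three visits are pairwise distinct as \emph{unordered} 2-trails, each excludes a $2$-element or $1$-element subset of $\{xu_1, xu_2, xu_3\}$, and one verifies directly (a short finite case check on which pairs of neighbours each visit touches) that no two of them exclude the same pair, so any two visits jointly exclude at most... enough to leave $|N(S)| \ge |S|$, and Hall's condition is satisfied, giving a perfect matching.

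The main obstacle is the ``only if'' direction in the boundary case $d^*(x) = 6$: here the degrees in $H(x)$ are only $m-2 = 1$ for each ``path-type'' visit, so Hall's condition is tight and one must rule out \emph{every} Hall-violating subset, not just singletons. The key observation that makes this manageable is that the visits in $J(x)$ form an edge-decomposition: two distinct visits cannot use the same pair of parallel edges, which is exactly what fails when twin visits are present. I would organize the boundary analysis by how many visits in $J(x)$ are of the parallel type $[u,x,u]$ (there can be $0$, $1$, or — only in the twin case — more with a repeated neighbour), and in each subcase exhibit the matching explicitly rather than invoking Hall abstractly. For $d^*(x) \ge 8$ the slack $m - 2 \ge 2$ makes a clean Hall-type counting argument go through without case analysis.
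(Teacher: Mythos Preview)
Your approach is correct and essentially the same as the paper's: both verify $|J(x)|=|A(x)|=d^*(x)/2$, invoke Hall's theorem, dispose of $d^*(x)\ge 8$ by a degree count, and isolate the obstruction at $d^*(x)=6$ as a pair of twin visits. The only difference is organizational: for $d^*(x)=6$ the paper starts from a Hall-violating set $S$, observes $|S|=2$, and computes $N_{H(x)}(S)=A(x)\setminus(\{xu,xv\}\cap\{xy,xz\})$ to force $\{u,v\}=\{y,z\}$ directly, whereas you plan an exhaustive check of the non-twin decompositions---both work, but the paper's computation avoids the case split.
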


\begin{proof}
We have $|J(x)| = |A(x)| = d^*(x)/2$ and $\d(H(x)) \ge (d^*(x)/2)-2 \geq 1$. One can show that, if $d^*(x) \ge 8$, then the neighbourhood $N_{H(x)}(S)$ in $H(x)$ of each $S\subseteq J(x)$ has size at least $|S|$. Thus, by Hall's marriage theorem, $H(x)$ has a perfect matching when $d^*(x) \ge 8$. 

Suppose $H(x)$ has no perfect matchings, so that $d^*(x)=6$ and $|J(x)| = |A(x)| = 3$. Then there exists $S \subseteq J(x)$ such that $|N_{H(x)}(S)| < |S|$. This implies $|S|=2$ and so $|N_{H(x)}(S)| \le 1$. Denote $S = \{(u,x,v), (y,x,z)\}$, where $u, v, y,z \in N(x)$ (the neighbourhood of $x$ in $G^*$). Then $N_{H(x)}(S)=(A(x)-\{xu, xv\}) \cup  (A(x)-\{xy, xz\})= A(x)-(\{xu, xv\} \cap \{xy, xz\})$.
Since $|N(x)| = 3$ and $|N_{H(x)}(S)| \le 1$, it follows that $\{u, v\} = \{y, z\}$, and therefore $(u,x,v)$  and $(y,x,z)$  are twin visits. 
   
Conversely, if $d^*(x)=6$ and $J(x)$ contains twin visits, then $H(x)$ consists of two paths of length two and hence has no perfect matchings.\qed
\end{proof}

\begin{definition}
\label{def:decom}
Let $C: v_0, e_1, v_1, e_2, v_2, \ldots, v_{l-2}, e_{l-1}, v_{l-1}, e_l, v_l$ be an Eulerian trail of $G^*$, possibly with $v_l = v_0$. The visit $(v_{i-1}, v_i, v_{i+1})$ to $v_i$ is said to be induced by $C$, $1 \le i \le l-1$. In addition, if $C$ is an Eulerian tour, then $(v_{l-1}, v_0, v_1)$ is also a visit to $v_0$ induced by $C$.

Denote by $C(x)$ the set of visits to $x \in V(G^*)$ induced by $C$. 

Define $H_C(x)$ to be the bipartite graph at $x$ as defined in Definition \ref{def:h} with respect to the visit-decomposition $C(x)$ of $E^*(x)$. (We leave $H_C(v_0)$ and $H_C(v_l)$ undefined if $C$ is an open Eulerian trail.)
\end{definition}

Note that a vertex may be visited several times by $C$ because the vertices on $C$ may be repeated. Indeed, $C(x)$ is a visit-decomposition of $E^*(x)$ for all vertices $x$, except $v_0$ and $v_l$ when $v_0 \ne v_l$.  
 
\begin{definition}
\label{def:prec} 
Let $C$ be an Eulerian tour of $G^*$ and $J(x)$ a visit-decomposition of $E^*(x)$. We say that $C$ is compatible with $J(x)$, written  
$C(x) \prec  J(x)$, 
if for every $(a,x,b) \in J(x)$, either $(a, x, b) \in C(x)$ or $(b, x, a) \in C(x)$.
\end{definition}
 
\begin{figure}[htb]
\begin{center}
\includegraphics[width=11cm]{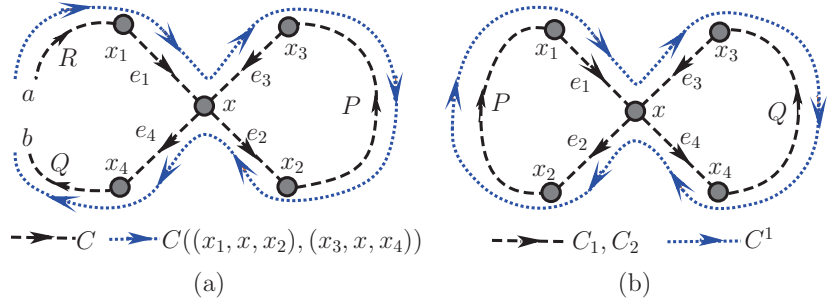}
\caption{(a)  Bow-tie operation;  (b) Concatenation operation.}
\label{fig:bt}
\end{center}
\end{figure}
 
\begin{definition}
\label{def:bowtie}
Let $C$ be a trail of $G^*$ with length at least four. Let $(x_1, x, x_2), (x_3,$  $x, x_4) \in C(x)$ be distinct visits, so that $C$ can be expressed as 
$$
C:  \overbrace{a, \ldots,     x_1}^{R}, e_1, x, e_2, \overbrace{x_2, \ldots, x_3}^{P}, e_3, x, e_4, \overbrace{x_4, \ldots, b}^{Q},
$$
possibly with $a = b$.
 
Define
$$
C((x_1, x, x_2), (x_3, x, x_4)):   \overbrace{a, \ldots,     x_1}^{R},  e_1, x, e_3^{-1}, \overbrace{x_3, \ldots, x_2}^{P^{-}}, e_2^{-1}, x, e_4, \overbrace{x_4, \ldots, b}^{Q}
$$
where $P^{-}$ is the trail obtained from $P$ by reversing its direction, and $e_2^{-1}$ and $e_3^{-1}$ are the same edges as $e_2$ and $e_3$ but with reversed orientations, respectively. (See Figure \ref{fig:bt} (a).) 

We call $C \rightarrow C((x_1, x, x_2), (x_3, x, x_4))$ the bow-tie operation on $C$ with respect to $(x_1, x, x_2)$ and $(x_3, x, x_4)$.
\end{definition}

\begin{definition}
\label{def:conca}
Let 
$$
C_1: x_1, e_1, x, e_2, \overbrace{x_2, \ldots, x_1}^{P}; \;\;\quad
C_2: x_3, e_3, x, e_4,   \overbrace{x_4, \ldots, x_3}^{Q}.
$$
be edge-disjoint closed trails of $G^*$ with $x$ as a common vertex. 
Define 
$$
C^1: x_1, e_1, x, e_3^{-1}, \overbrace{x_3, \ldots,x_4}^{Q^{-1}}, e_4^{-1}, x, e_2, \overbrace{x_2, \ldots, x_1}^{P}.
$$
We call $(C_1, C_2) \rightarrow C^1$ the concatenation operation with respect to $(C_1, C_2, (x_1, x,$ $x_2)$,  $(x_3, x, x_4))$. (See Figure \ref{fig:bt} (b).) 
\end{definition}

\begin{remark}
\label{rem:conca}
%{\em 
Some of $x_1, x_2, x_3, x_4$ or even all of them in Definitions \ref{def:bowtie} and \ref{def:conca} are allowed to be the same vertex. Each of $P, Q$ (and $R$ in Definition \ref{def:bowtie}) may visit some of $x, x_1, x_2, x_3, x_4$ several times, and they may have common vertices. 

In each operation above, the visits $(x_1, x, x_2)$, $(x_3, x, x_4)$ are replaced by $(x_1, x, x_3)$, $(x_4, x, x_2)$, respectively. All other visits induced by $C$ (in Definition \ref{def:bowtie}) or $C_1 \cup C_2$ (in Definition \ref{def:conca}) are retained or with orientation reversed. 

In Definition \ref{def:conca}, $C^1$ is a closed trail which covers every edge covered by $C_1$ and $C_2$. In particular, if $C_1$ and $C_2$ collectively cover all edges of $G^*$, then $C^1$ is an Eulerian tour of $G^*$. 
%}
\end{remark}

\section{Proof of Theorems \ref{th:ham} and \ref{cor:ham}}
\label{sec:hamil}

\begin{proof}\textbf{of Theorem \ref{th:ham}}~
Denote by $S_i$ the set of vertices of $G$ with degree $i$, for $i \ge 1$. 

\medskip
% \textit{Necessity}~~
Suppose  that $G$ has no isolated vertices and $X(G)$ is hamiltonian. We show that (a), (b) and (c) hold. Note first that if $G$ has a degree-one vertex, then the unique arc emanating from it gives rise to an isolated vertex of $X(G)$. Similarly, if $x, y \in S_2$ are adjacent, say, $N(x) = \{u, y\}, N(y) = \{x, v\}$, then the edge of $X(G)$ between $xu$ and $yv$ is an isolated edge no matter whether $u \ne v$ or not. Since $X(G)$ is assumed to be hamiltonian, it follows that $G$ is connected with $\d(G) \ge 2$ and $S_2$ is an independent set of $G$. 

It remains to prove that $G-S_2$ is connected. Suppose otherwise. Then we can choose a minimal subset $S$ of $S_2$ such that $G-S$ is disconnected. Note that $S \ne \emptyset$ as $G$ is connected. Let $H$ be a component of  $G-S$. The minimality of $S$ implies that each vertex of $S$ has exactly one neighbour in $V(H)$, and each vertex of $S_2$ with both neighours in $H$ (if such a vertex exists) is contained in $V(H)$.   
Denote by $A_1$ the set of arcs of $G$ with tails in $S$ and heads outside of $V(H)$.
Denote by $A_2$ the set of arcs of $G$ with tails in $V(H)$ (and heads in $V(H)$ or $S$). One can verify that the subgraph of $X(G)$ induced by $A_1 \cup A_2$ is a connected component of $X(G)$. Since there are arcs of $G$ not in $A_1 \cup A_2$, it follows that $X(G)$ is disconnected, contradicting our assumption. Hence $G-S_2$ is connected.
 
\medskip
%\textit{Sufficiency}~~

Suppose that $G$ satisfies (a), (b) and (c). We aim to prove that $X(G)$ is hamiltonian. Note that $G$ is connected by (c). Let $G^*$ be the multigraph obtained from $G$ by doubling each edge. Then the degree $d^*(v)=2d(v)$ of each $v \in V(G)$ in $G^*$ is even. Hence $G^*$ is Eulerian.  We will prove the existence of an Eulerian tour of $G^*$ such that the corresponding bipartite graph (see Definition \ref{def:decom}) at each vertex has a perfect matching. We will then exploit such an Eulerian tour to construct a Hamilton cycle of $X(G)$. 
    
We claim first that there exists an Eulerian tour $C$ of $G^*$ such that
\begin{equation}
 \label{eq:s2}
 \mbox{if  $v \in S_2$ with $N(v)=\{u, w\}$,  then $C(v) \prec   \{(u,v,u), (w,v,w)\}$.}   
\end{equation}
To construct such an Eulerian tour, we can start from any vertex and travel as far as possible without repeating any edge such that, whenever the tour reaches a vertex of $S_2$, it returns to the previous vertex immediately.  
Since $G-S_2$ is connected, an Eulerian tour $C$ of $G^*$ satisfying (1) can be constructed this way. Note that $G^* - S_2$ is Eulerian because it is connected and all its vertices have even degrees. 
 
For an Eulerian tour $C$ of $G^*$ satisfying (\ref{eq:s2}), let $Z(C)$ denote the set of vertices $x$ such that $H_C(x)$ has no perfect matchings. Since for every $x \in S_2$, $H_C(x) \cong 2K_2$ is a perfect matching, by Lemma \ref{le2} we have $Z(C) \subseteq S_3$. 

Now we choose an Eulerian tour $C$ of $G^*$ satisfying (\ref{eq:s2}) such that $|Z(C)|$ is minimum. 
We claim that $Z(C) = \emptyset$. Suppose otherwise. Then by Lemma \ref{le2}, $C(x)$ contains twin visits for each $x \in Z(C)$. Denote $N(x)=\{x_1,x_2,x_3\}$ for a fixed $x \in Z(C)$, and assume without loss of generality that $C(x)=\{(x_1,x,x_2), (x_1,x,x_2), (x_3,x,x_3)\}$. Denote $C' = C((x_1,x,x_2), (x_3,x,x_3))$. Then $C'$ is an Eulerian tour of $G^*$ and $C'(x) = \{(x_1,x,x_2), (x_1,x,x_3), (x_2,x,x_3)\}$. One can see that $H_{C'}(x)$ is a perfect matching of three edges, and $H_{C'}(y)$ is isomorphic to $H_C(y)$ for each $y \ne x$. Thus $Z(C')$ is a proper subset of $Z(C)$, and moreover (\ref{eq:s2}) is respected by $C'$ at every $v \in S_2$. Since this contradicts the choice of $C$, we conclude that $Z(C) = \emptyset$; that is, $H_{C}(v)$ has a perfect matching for each $v \in V(G)$.   

Let $C$ be a fixed Eulerian tour of $G^*$ satisfying (\ref{eq:s2}) such that $Z(C) = \emptyset$. Let us fix a perfect matching of $H_{C}(v)$ for each $v \in V(G)$. 
Every traverse of $C$ to $v$ corresponds to a visit to $v$, say, $(u,v,w)$, and in the chosen perfect matching of  $H_{C}(v)$, $(u,v,w)$ is matched to an arc of $A(v)$ other than $vu$ and $vw$. Denote this arc by $\phi (u,v,w)$. Then for any two consecutive visits $(u,v,w), (v,w,x)$ induced by $C$ (that is, $(u, v, w, x)$ is a segment of $C$), $\phi (u,v,w)$ and $\phi (v,w,x)$ are adjacent in $X(G)$. Since $C$ is an Eulerian tour of $G^*$ and a perfect matching of each $H_{C}(v)$ is used, every arc of $G$ is of the form $\phi (u,v,w)$ for some segment $(u, v, w)$ of $C$. Therefore, if, say, $C = (u, v, w, x, y, \ldots, a, b, c, u)$, then the sequence 
$$
\phi (u,v,w), \phi (v,w,x), \phi (w,x,y), \ldots, \phi (a,b,c), \phi (b,c,u), \phi (c,u,v), \phi (u,v,w)
$$
of arcs of $G$ gives rise to a Hamilton cycle of $X(G)$. \qed
 \end{proof}

We illustrate the proof above by the following example.  
 
\begin{example}
\label{eg2}
Since the Petersen graph $PG$ (see Figure \ref{fig:pg}) satisfies the conditions in Theorem \ref{th:ham}, its 3-arc graph $X(PG)$  is  hamiltonian. Let

$$
\begin{array}{l} 
C: a_1,a_2,a_3,a_4, a_5, a_1, b_1, b_4, b_2, b_5, b_3, b_1, a_1, a_2, b_2, 
    \\ \quad \quad b_5, a_5, a_4, b_4,b_2,a_2, a_3, b_3, b_1, b_4, a_4, a_3, b_3, b_5, a_5,a_1.
   \end{array}    
$$

Then $C$ is an Eulerian tour of the multigraph $PG^*$ obtained from $PG$ by doubling each edge. One can verify that at each $a_i$ or $b_i$, $H_C(a_i)$ or $H_C(b_i)$ has a perfect matching. In $H_C(a_2)$ the `vertex' $(a_1,a_2,a_3)$ is matched to the `vertex' $a_2b_2$, and in $H_C(a_3)$, $(a_2,a_3,a_4)$ is matched to $a_3b_3$, and so on. Continuing, one can verify that $C$ gives rise to the following Hamilton cycle of $X(PG)$:
$$
\begin{array}{l} 
a_2b_2,  a_3b_3, a_4b_4,  a_5b_5,
  a_1a_2,  b_1b_3, b_4a_4,  b_2a_2, b_5a_5, 
   b_3a_3, b_1b_4,  a_1a_5, a_2a_3,  b_2b_4,  b_5b_3,  \\
 \quad a_5a_1, a_4a_3,  b_4b_1, b_2b_5,
   a_2a_1, a_3a_4,  b_3b_5, b_1a_1,  b_4b_2,
  a_4a_5,  a_3a_2, b_3b_1,  b_5b_2, a_5a_4,  a_1b_1, a_2b_2.
  \end{array}
$$
\end{example}

\begin{figure}[htb]
\begin{center}
\includegraphics[width=5cm]{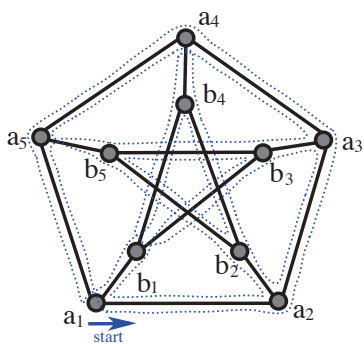}
\caption{An Eulerian tour of $PG^*$ which produces a Hamilton cycle of the 3-arc graph of the Petersen graph $PG$.}
\label{fig:pg}
\end{center}
\end{figure}

\bigskip
\begin{proof}\textbf{of Theorem \ref{cor:ham}}~
(a) Let $G$ be a graph. Define $\hat{G}$ to be the graph obtained from $G$ by replacing each degree-two vertex $v$ by a pair of nonadjacent vertices each joining to exactly one neighbour of $v$ in $G$. In \cite[Theorem 2]{KZ} it is proved that, if $\d(G) \ge 2$, then $X(G)$ is connected if and only if $\hat{G}$ is connected. One can verify that $\delta (G) \geq 2$ and $\hat{G}$ is connected if and only if (a), (b) and (c) in Theorem \ref{th:ham} hold. Thus, by Theorem \ref{th:ham}, if $X(G)$ is connected, then it is hamiltonian. The converse of this statement is obvious. 

(b) If $G$ is connected with $\d(G) \ge 3$, then $\hat{G} = G$ and so $X(G)$ is connected by \cite[Theorem 2]{KZ}. Hence, by (a), $X(G)$ is hamiltonian. Since $\d(G) \ge 3$, we have $\d(X(G)) \ge 3$. Thus, by applying (a) to $X(G)$, we see that $X^2(G)$ is hamiltonian. Continuing, by induction we can prove that $X^i(G)$ is hamiltonian for every $i \ge 1$.  \qed
\end{proof}

\section{Proof of Theorems \ref{th:oddlength} and \ref{th:ham-c}}
\label{sec:hamil-conn}

Let us first introduce an operation that will be used in the proof of Theorem \ref{th:oddlength}.
Let $G^*$ be an Eulerian multigraph and $C$ an Eulerian tour of $G^*$. Let $(z_1, x, z_2)$ be a visit of $C$ to $x$. Write    
$$
C: z_1,e_1, x,  e_2, \overbrace{z_2, \ldots, z_1}^{T}, 
$$
where $e_1$ is the oriented edge from $z_1$ to $x$, $e_2$ the oriented edge from $x$ to $z_2$, and $T$ the segment of $C$ from $z_2$ to $z_1$ covering all edges of $G^*$ except $e_1$ and $e_2$.
Add two new vertices $t, t'$ to $G^*$ and join them to $x$ by edges $e_{t}, e_{t'}$, respectively, with orientation towards $x$. Denote the resultant multigraph by $G^*_{C}(z_1, x, z_2)$. Set  
$$
W = W_{C}(z_1, x, z_2): t,e_{t}, x, e_{2},\overbrace{z_2, \ldots, z_1}^{T}, e_1, x,   e_{t'}^{-1}, t'.
$$
Since $C$ is an Eulerian tour of  $G^*$, $W$ is an open Eulerian trail of $G^*_{C}(z_1, x, z_2)$. 
Denote by $W(x)$ the set of visits to $x$ induced by $W$. As the first and last visits induced by $W$, $(t,e_{t}, x, e_{2}, z_2)$ and $(z_1,e_1, x,   e_{t'}^{-1}, t')$ are members of $W(x)$. Note that $xt,xt' \notin A(x)$. 

\begin{definition}
\label{def:kl}
Define $K_{C}(z_1, x, z_2)$ to be the bipartite graph with bipartition $\{W(x), A(x)\}$ such that an arc in $A(x)$ is adjacent to a visit $p \in W(x)$ if and only if its head does not appear in $p$. Denote by $L_{C}(z_1, x, z_2)$ the graph obtained from $K_{C}(z_1, x, z_2)$ by deleting the vertices $(t,e_{t}, x, e_{2}, z_2)$, $(z_1,e_1, x,   e_{t'}^{-1}, t')$, $xz_1$ and $xz_2$.   
\end{definition}

To prove Theorem \ref{th:oddlength}, we need to prove that, for any two distinct arcs $xy, uv$ of $G$, there exists a Hamilton path of $X(G)$ between $xy$ and $uv$. We will prove the existence of such a path by constructing a specific Eulerian trail in a certain auxiliary multigraph $G^*$. We treat the cases $x=u$ and $x\ne u$ separately in the next two lemmas. 

\begin{lemma}
\label{lem:1}
Under the condition of Theorem \ref{th:oddlength}, for any distinct arcs $xy, xv \in A(G)$ with the same tail, there exists a Hamilton path of $X(G)$ between $xy$ and $xv$. 
\end{lemma}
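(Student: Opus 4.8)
The plan is to mimic the strategy used for Theorem~\ref{th:ham}: encode a Hamilton path of $X(G)$ between $xy$ and $xv$ as a suitable Eulerian trail in an auxiliary multigraph. Start from $G^* = $ the multigraph obtained from $G$ by doubling each edge, so $G^*$ is Eulerian with every degree $2d(v)\ge 6$. Pick an Eulerian tour $C$ of $G^*$. Since $xy$ and $xv$ both have tail $x$, I want to ``cut'' the tour open at $x$ in such a way that the two dangling ends of the resulting open trail will be mapped to $xy$ and $xv$ respectively. This is exactly what the construction preceding Definition~\ref{def:kl} sets up: choosing a visit $(z_1,x,z_2)$ of $C$, attach pendant vertices $t,t'$ at $x$ to get $G^*_C(z_1,x,z_2)$ and the open Eulerian trail $W = W_C(z_1,x,z_2)$ running $t \to x \to \cdots \to x \to t'$. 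The first visit induced by $W$ is $(t,x,z_2)$ and the last is $(z_1,x,t')$; in the bipartite graph $K_C(z_1,x,z_2)$ we force these two special visits to be matched to the prescribed arcs $xy$ and $xv$. Concretely, I would choose $C$ and the cutting visit $(z_1,x,z_2)$ so that $\{z_1,z_2\} \cap \{y,v\}$ behaves well — ideally $z_1,z_2 \notin \{y,v\}$, which is possible because $d(x)\ge 3$ and we have enough edges to route through — and then the first visit can be matched to $xy$, the last to $xv$.

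Next I would reduce the existence of the desired Hamilton path to: (i) $K_C(z_1,x,z_2)$ has a perfect matching in which $(t,x,z_2)\mapsto xy$ and $(z_1,x,t')\mapsto xv$ (equivalently, after deleting these two visits and the arcs $xz_1,xz_2$, the graph $L_C(z_1,x,z_2)$ — suitably adjusted to also delete $xy,xv$ — has a perfect matching), and (ii) for every other vertex $w\ne x$ of $G$, the bipartite graph $H_C(w)$ (Definition~\ref{def:decom}) has a perfect matching. Part (ii) is handled exactly as in the proof of Theorem~\ref{th:ham}: if some $H_C(w)$ fails to have a perfect matching, then by Lemma~\ref{le2} we have $d^*(w)=6$, so $d(w)=3$, and $C(w)$ contains twin visits; a bow-tie operation at $w$ (of the form $C((w_1,w,w_2),(w_3,w,w_3))$) removes $w$ from the bad set without disturbing the matchings at other vertices or the cut structure at $x$. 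Iterating and taking $C$ with the bad set minimal shows we may assume all $H_C(w)$, $w\ne x$, have perfect matchings. For part (i), the key is that $L_C(z_1,x,z_2)$ (and the variant with $xy,xv$ also removed) has minimum degree large enough — roughly $d(x)-2$ on one side — to apply Hall's theorem as in Lemma~\ref{le2}; this is where the bound $\delta(G)\ge 3$ and the $2$-edge-connectedness enter, the latter guaranteeing we can always find a cutting visit $(z_1,x,z_2)$ with the edges $e_1,e_2$ lying in distinct ``parts'' so that removing them does not fragment the tour's reroutings.

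The role of the odd-path hypothesis is parity: reading off the sequence $\phi(\text{visit}_1), \phi(\text{visit}_2),\dots$ along $W$ produces a walk in $X(G)$ that visits every vertex (arc of $G$) exactly once, but the two end-arcs we land on are determined by where we cut, and to be able to land on the specified pair $xy, xv$ — rather than on some forced pair of opposite parity — we need a path in $G$ of appropriate (odd) length connecting the relevant vertices so that the cut can be placed correctly. So after the matching arguments I would argue: given $xy,xv$, use the odd-length path between the appropriate vertices to locate inside some Eulerian tour $C$ a visit $(z_1,x,z_2)$ at which the cut yields first/last visits matchable to $xy$ and $xv$; if the obvious choice has the wrong parity, modify $C$ by rerouting (bow-tie / concatenation as in Section~\ref{sec:pre}) along the odd path to flip it.

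I expect the main obstacle to be part (i): showing $L_C(z_1,x,z_2)$, with the extra deletions of $xy$ and $xv$, still satisfies Hall's condition, and doing the careful degenerate-case analysis when $d(x)=3$ (so only one visit remains after deletions and it must be matchable to the one remaining arc) — this is the analogue of the $d^*(x)=6$ exceptional case in Lemma~\ref{le2}, and may require choosing the cutting visit $(z_1,x,z_2)$ judiciously, or a final bow-tie at $x$ itself, to avoid a ``twin visit'' obstruction. The parity bookkeeping needed to guarantee the cut can be placed to hit exactly $xy$ and $xv$ (rather than some reversed or shifted pair) is the other delicate point, and is precisely where the hypothesis that $G$ has an odd-length path between any two vertices is used.
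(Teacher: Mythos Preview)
Your proposal has a genuine counting gap that breaks the matching argument at $x$. If $G^*$ is obtained by doubling \emph{every} edge of $G$, then $G^*$ has $2|E(G)|$ edges and the Eulerian tour $C$ induces exactly $d(z)$ visits at each vertex $z$. Cutting $C$ at a visit $(z_1,x,z_2)$ and attaching $t,t'$ replaces that one visit by the two visits $(t,x,z_2)$ and $(z_1,x,t')$, so the open trail $W$ induces $d(x)+1$ visits at $x$ while $|A(x)|=d(x)$. Hence $K_C(z_1,x,z_2)$ is an \emph{unbalanced} bipartite graph and can never have a perfect matching; equivalently, the total number of visits along $W$ is $2|E(G)|+1$, one more than $|A(G)|$, so no bijection $\phi$ from visits to arcs exists. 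Your sentence ``reading off the sequence $\phi(\text{visit}_1),\phi(\text{visit}_2),\dots$ along $W$ produces a walk in $X(G)$ that visits every vertex (arc of $G$) exactly once'' is therefore false with this choice of $G^*$.

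This is exactly where the odd-path hypothesis is actually used, and it is used \emph{up front} in the construction of $G^*$, not as a post-hoc parity fix. The paper takes a shortest odd path $P:y=x_0,x_1,\dots,x_l=v$ and builds $G^*$ by leaving the edges $\{x,y\},\{x,v\}$ and the odd-indexed edges of $P$ single, tripling the even-indexed edges of $P$, and doubling everything else. Because $l$ is odd, each interior vertex of $P$ is incident to one single edge and one tripled edge from $P$, so its $G^*$-degree is still $2d(\cdot)$; but at $x$ two incident edges are single, giving $d^*(x)=2d(x)-2$. The net effect is $|E(G^*)|=2|E(G)|-1$, so after cutting and adding $t,t'$ the open trail has exactly $2|E(G)|$ visits and $K_C(z_1,x,z_2)$ is balanced with $|W(x)|=|A(x)|=d(x)$. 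A further consequence you did not anticipate is that some vertices (those on $P$) now have edges of multiplicity $1$ and $3$ as well as $2$, so the ``$H_C(w)$ has a perfect matching'' step is \emph{not} handled exactly as in Theorem~\ref{th:ham}; the twin-visit obstruction analysis at such vertices requires the more delicate case split in the paper's Claim~1. Finally, the $2$-edge-connectedness hypothesis is not used in this lemma at all; it enters only in the companion lemma for arcs with distinct tails.
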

 
\begin{proof} 
By our assumption there exists a path in $G$ of odd length connecting $y$ and $v$. Let  
$$
P: y=x_0, x_1,x_2, \ldots, x_{l-1}, x_{l}=v 
$$   
be a path in $G$ between $y$ and $v$ with minimum possible odd length $l \ge 1$. Denote $E_0(P) = \{\{x_j,x_{j+1}\} \mid j=0,2,\ldots, l-1\}$ and $E_1(P) = \{\{x_j,x_{j+1}\} \mid j=1,3,\ldots, l-2\}$.
         
\medskip
{\bf Case 1.}  $x \not \in V(P)$. In this case let $G^*$ be obtained from $G$
by doubling each edge of $E(G)-(E(P) \cup\{\{x,y\}, \{x,v\}\})$ and tripling each edge of $E_0(P)$. 
 
\medskip    
{\bf Case 2.}  $x \in V(P)$. In this case we have $l\geq3$ and $x = x_j$ for some $1 \le j \le l-1$. If $2\le j \le l-2$, then since $l$ is odd, one of the two paths  $y, x_1, \ldots,  x_{j-1}, x, v$ and $y,  x,  x_{j+1},\ldots,  x_{l-1}, v$ would be a path of odd length connecting $y$ and $v$ that is shorter than $P$, contradicting the choice of $P$. Therefore, either $x=x_1$ or $x=x_{l-1}$. Assume without loss of generality that $x= x_1$. Define $G^*$ to be the multigraph obtained from $G$
 by doubling each edge of $E(G)-[(E(P) -\{\{x,y\}\})\cup  \{\{x,v\}\}]$  and tripling each edge of  $E_0(P)-\{\{x,y\}\}$. 

In each case above, $d^*(x) = 2d(x)-2$ and $d^*(z) = 2d(z)$ for every $z \ne x$, and hence $G^*$ is Eulerian.   

Set $a=y$ in Case 1 and $a = x_2$ in Case 2. By extending the $2$-path $a,x,v$ to an Eulerian tour, we see that there are Eulerian tours of $G^*$ which pass through $(a,x,v)$. Choose $C$ to be an Eulerian tour of $G^*$ with $(a,x,v) \in C(x)$ such that $|Z(C)|$ is minimum, where $Z(C)$ is the set of vertices $w \ne x$ of $G^*$ such that $H_{C}(w)$ has no perfect matching. 

\medskip
\textbf{Claim 1.}~$Z(C) = \emptyset$; that is, $H_{C}(w)$ has a perfect matching for every $w \ne x$.  

\medskip
\textit{Proof of Claim 1.}~
We prove this by way of contradiction. Suppose $H_C(w)$ has no perfect matching for some $w \ne x$. By Lemma \ref{le2}, $d^*(w)=6$ and $C(w)$ contains twin visits. Since $w \ne x$, we have $d(w)=3$ by the construction of $G^*$. Denote $N(w)=\{w_1, w_2, w_3\}$. In the case when each of $w_1,w_2$ and $w_3$ is joined to $w$ by two parallel edges, we apply the bow-tie operation at $w$ with respect to one of the twin visits and the third visit of $C(w)$. Similar to the proof of Theorem \ref{th:ham}, for the resultant Eulerian tour $C'$ of $G^*$, $H_{C'}(w)$ has a perfect matching, and the visit-decomposition at any other vertex is unchanged. Thus $(a,x,v) \in C'(x)$ and $Z(C')$ is a proper subset of $Z(C)$, contradicting the choice of $C$. 

It remains to consider the case where exactly one vertex of $N(w)$ is joined to $w$ by one, two or three (parallel) edges, respectively. 
Without loss of generality we may assume that there is one edge between $w_3$ and $w$, two parallel edges between $w_1$ and $w$, and three parallel edges between $w_2$ and $w$. Then $C(w)=\{[w_1,w,w_2],  [w_1,w,w_2], [w_3,w,w_2]\}$. Reversing the orientation of $C$ when necessary, we may assume $(w_1,w,w_2) \in C(w)$. Denote by $e_1, e_3$ the oriented parallel edges from $w_1$ to $w$, by $e_2, e_4, e_6$ the oriented parallel edges from $w$ to $w_2$, and by $e_5$ the oriented edge from $w$ to $w_3$. 

\medskip 
Case (a):~$C(w)=\{(w_1,w,w_2),  (w_1,w,w_2), [w_3,w,w_2]\}$. We may assume
$$
C: w_1, e_1, w, e_2, w_2, f, \ldots, g, w_1, e_3, w, e_4, w_2, h, \ldots, k, w_1.
$$
Let
$$
C': w_1, e_1, w, e_3^{-1}, w_1, g^{-1}, \ldots, f^{-1}, w_2, e_2^{-1}, w, e_4, w_2, h, \ldots, k, w_1.
$$
Then $C'$ is an Eulerian tour of $G^*$ and $C'(w)=\{(w_1,w,w_1),  (w_2,w,w_2), [w_3,w,$ $w_2]\}$. Moreover, $H_{C'}(w)$ has a perfect matching which matches $(w_1,w,w_1)$,  $(w_2,w,w_2)$, $[w_3,w,w_2]$ to $ww_2$, $ww_3$, $ww_1$ respectively. 

\medskip 
Case (b):~$C(w)=\{(w_1,w,w_2),  (w_2,w,w_1), [w_3,w,w_2]\}$. We may assume
$$
C: w_1, e_1, w, e_2, w_2, f, \ldots, g, w_2, e_4^{-1}, w, e_3^{-1}, w_1, h, \ldots, k, w_1.
$$
Denote
$$
C_1:  w_1, e_1,  w, e_3^{-1}, w_1, h, \ldots, k, w_1;\;\;\quad
C_2:  w_2, e_4^{-1}, w, e_2, w_2, f, \ldots, g, w_2.
$$
Note that each of  $C_1$ and $C_2$ is a closed trail,  
and $[w_3,w,w_2]$ is a segment of exactly one of  $C_1$ and $C_2$.

In the case when $(w_3,w,w_2) \in C(w)$ and it is in $C_2$, we first rewrite $C_2$ to highlight the position of  $(w_3,w,w_2)$ in  $C_2$:
$$
C_2': w_3,e_5^{-1},w, e_6, w_2, \ldots,w_3.  
$$
Applying the concatenation operation to $(C_1, C'_2, (w_1,  w,  w_1), (w_3,  w,  w_2))$ yields:   
 $$
C':  w_1, e_1,  w, e_5, w_3, \ldots,  w_2, e_6^{-1}, w,    e_3^{-1}, w_1, h, \ldots, k, w_1.   
$$
We have $C'(w)=\{(w_1,w,w_3),  (w_2,w,w_1), [w_2,w,w_2]\}$. Hence $H_{C'}(w)$
has a perfect matching which matches $(w_1,w,w_3)$, $(w_2,w,w_1)$, $[w_2,w,w_2]$ to $ww_2, ww_3,$ $ww_1$ respectively. 
 
In the case when $(w_3,w,w_2) \in C(w)$ and it is in $C_1$,
we first rewrite $C_1$ to highlight the position of  $[w_3,w,w_2]$ in $C_1$:  
$$
C_1': w_3,e_5^{-1},w, e_6, w_2, \ldots,w_3.  
$$
Applying the concatenation operation to $(C_2, C_1', (w_2, w, w_2), (w_3, w,  w_2))$ yields:      
$$
C': w_2, e_4^{-1}, w, e_5, w_3, \ldots,  w_2, e_6^{-1}, w,   e_2, w_2, f, \ldots, g, w_2.  
$$
Since $C'(w)=\{(w_2,w,w_3), (w_2,w,w_2), [w_1,w,w_1]\}$, $H_{C'}(w)$ has a perfect matching which matches $(w_2,w,w_3)$, $(w_2,w,w_2)$, $[w_1,w,w_1]$ to $ww_1, ww_3, ww_2$ respectively. 

The remaining case when $(w_2,w,w_3) \in C(w)$ can be dealt with similarly. 

In all possibilities above we obtain a new Eulerian tour $C'$ of $G^*$ such that $H_{C'}(w)$ has a perfect matching whilst the visit-decomposition at any other vertex is unchanged. Thus $(a,x,v) \in C'(x)$ and $Z(C')$ is a proper subset of $Z(C)$, contradicting the choice of $C$. This completes the proof of Claim 1.  

\medskip
\textbf{Claim 2.}~
There exists an Eulerian tour $C^{*}$ of $G^*$ together with a visit $(u_1, x, u_2) \in C^{*}(x)$ such that (i) $H_{C^*}(z)$ has a perfect matching for every $z \ne x$, and (ii) the bipartite graph $K_{C^*}(u_1, x, u_2)$ (as defined in Definition \ref{def:kl}) has a perfect matching under which the first and last visits induced by $W_{C^{*}}(u_1, x, u_2)$ are matched to $xy$ and $xv$ resepctively. 

Note that, for $z \ne x$, $H_{C^*}(z) = H_{W}(z)$, where $W = W_{C^{*}}(u_1, x, u_2)$. 
        
\medskip
\textit{Proof of Claim 2.}~
We will prove the existence of $C^*$ and $(u_1, x, u_2) \in C^{*}(x)$ based on $C$ as in Claim 1.  

\medskip 
Case (a):~$G^*$ was constructed in Case 1. Then $(a,x,v)=(y,x,v) \in C(x)$ and all edges of $G$ incident with $x$ except $\{x,y\}$ and $\{x,v\}$ were doubled. 

In the case when $d(x)=3$, let $z_1$ be the neighbour of $x$ in $G$ other than $y$ and $v$. One can see that $K_{C}(z_1, x, z_1)$ has a perfect matching which matches $(t,x,z_1)$,  $(y,x,v)$,  $(z_1,x,t')$ to $xy$, $xz_1$, $xv$, respectively.
       
In the case when $d(x)=4$, let $z_1$ and $z_2$ be the neighbours of $x$ in $G$ other than $y$ and $v$. Since $(y,x,v) \in C(x)$, without loss of generality we may assume $C(x)\prec \{(z_1,x,z_1), (z_2,x,z_2), (y,x,v)\}$ or $\{(z_1,x,z_2), [z_1,x,z_2], (y,x,v)\}$. If $C(x)\prec \{(z_1,x,z_1),  (z_2,x,z_2), (y,x,v)\}$, then $K_{C}(y, x, v)$ has a perfect matching which matches $(t,x,v)$,    $(z_1,x, z_1)$,  $(z_2,x, z_2)$,  $(y,x,t')$ 
to $xy$, $xz_2$, $xz_1$, $xv$, respectively. In the case when $C(x) \prec \{(z_1,x,z_2),  [z_1,x,z_2], (y,x,v)\}$, by applying the bow-tie operation at $x$ with respect to 
$((z_1,x,z_2), (y,x,v))$ we obtain a new Eulerian tour $C' = C((z_1,x,z_2), (y,x,v))$ for which 
$C'(x)=\{[z_1,x,z_2],  (z_j,x, y),   (z_{j'},x,v)\}$, where $\{j,j'\}=\{1,2\}$.
Without loss of generality we may assume $C'(x)=\{(z_1,x,z_2),  (z_j,x, y),   (z_{j'},x,v)\}$. One can see that $K_{C'}(z_1, x, z_2)$ contains a perfect matching which matches $(t,x,z_2)$, $(z_j,x, y)$, $(z_{j'},x,v)$, $(z_1,x,t')$ to $xy$, $xz_{j'}$,  $xz_j$, $xv$, respectively.
    
Assume $d(x) \geq5$. If $L_{C}(y, x, v)$ has a perfect matching, then adding the edges $\{(t, x,v), xy\}$, $\{(y,x,t'), xv\}$ to it yields a perfect matching of $K_{C}(y, x, v)$ which matches the first and last visits of $W_{C}(y, x, v)$ to $xy, xv$, respectively. 
Suppose that $L_{C}(y, x, v)$ has no perfect matchings. Similar to Lemma \ref{le2}, by using Hall's marriage theorem we can prove that $d(x)=5$ and $C(x)$ contains twin visits, say, $[z_1,x,z_2]$; that is, $C(x)\prec \{[z_1,x,z_2],  [z_1,x,z_2], [z_3,x,z_3], (y,x,v)\}$.  Without loss of generality we may assume $(z_1,x,z_2) \in C(x)$. It is not hard to see that $K_{C}(z_1, x, z_2)$ has a perfect matching  which matches  $(t,x,z_2)$, $(z_1,x,z_2)$, $[z_3,x,z_3]$, $(y,x,v)$, $(z_1,x, t')$  
to $xy$, $xz_3$, $xz_2$,  $xz_1$,  $xv$,  respectively.

\medskip
Case (b):~$G^*$  was constructed in Case 2. Then $(x_2, x, v) \in C(x)$ and all edges of $G$ incident with $x$ except $\{x,x_2\}$ and $\{x,v\}$ were doubled. 
 
In the case when $d(x)=3$, we have $C(x) \prec \{(x_2,x,v), (y,x,y)\}$ and $K_{C}(x_2, x, v)$ has a perfect matching which matches $(t, x, v), (y, x, y), (x_2, x, t')$ to $xy, xx_2, xv$, respectively. 
 
In the case when $d(x)=4$, we have $C(x)\prec \{(x_2,x,v),  [z_1,x,y], [z_1,x,y]\}$ or  
$C(x)\prec \{(x_2,x,v) (z_1,x,z_1)$, $(y,x,y)\}$, where $z_1$ is the neighbour of $x$ other than $y,v,x_2$.  
If $C(x)\prec \{(x_2,x,v),  [z_1,x,y], [z_1,x,y]\}$, let $(z_1,x,y) \in C(x)$, say. Then $K_{C}(y, x, z_1)$ has a perfect matching, namely $(t, x, z_1), (x_2,x,v), [z_1,x,y],$ $(y, x, t')$ are matched to $xy, xz_1, xx_2, xv$, respectively. 
If $C(x)\prec \{(x_2,x,v) (z_1,x,$ $z_1), (y,x,y)\}$, then $K_{C}(z_1, x, z_1)$ has a perfect matching which matches $(t, x, z_1),$   $(x_2,x,v), (y,x,y), (z_1, x, t')$ to $xy, xz_1, xx_2, xv$, respectively. 
   
Assume $d(x) \geq 5$ hereafter. In the case when $L_{C}(x_2,x,v)$ has a perfect matching, say, $M$, let $xy$ be matched to $(w_1,x,w_2)$ by $M$, where $w_1,w_2 \in N(x)-\{x_2,v,y\}$. Deleting $\{(w_1,x,w_2), xy\}$ from $M$ and then adding $\{(w_1,x,w_2),$ $xx_2\},  \{(t,x,v), xy\}$ and $\{(x_2,x,t'), xv\}$ yields a perfect matching of $K_{C}(x_2,x,v)$ satisfying (ii) in Claim 2.  

Suppose $L_{C}(x_2,x,v)$ has no perfect matchings. Similar to Lemma \ref{le2}, we can prove that $d(x)=5$ and $C(x)$  contains twin visits.
    Denote by $z_1,z_2 \ne y, v, x_2$ the other two neighbours of $x$. 
    Let  $(w_1,x,w_2)$  be one of the twin visits in $C(x)$, where $w_1, w_2 \in \{y,z_1,z_2\}$ are distinct, and let $w_3$ denote the unique vertex in $\{y,z_1,z_2\} -\{w_1, w_2\}$.
    Then $C(x)\prec \{(x_2,x,v), (w_1,x,w_2),  [w_1,x,w_2], (w_3, x,$ $w_3)\}$.  Since $w_1$ and $w_2$ are distinct, one of them, say, $w_2$, is not equal to $y$. 
Thus $K_{C} (w_1,x,w_2)$ has  a perfect matching which matches 
$(t,x,w_2)$,  $(x_2,x,v)$,   $[w_1,x,$ $w_2]$,  $(w_3, x, w_3)$,   $(w_1,x, t')$ 
to $xy$, $xw_2$, $xw_3$, $xx_2$,  $xv$, respectively.     

Since $H_C(z)$ has a perfect matching for every $z \ne x$, one can see that in all possibilities above, condition (i) in Claim 2 is satisfied by the  underlying Eulerian tour (which is $C$ or $C'$). This proves  Claim 2.

Choose an Eulerian tour $C^*: w_l, x, w_1, w_2, w_3, \ldots, w_l$ of $G^*$ together with a visit $(w_l, x, w_1) \in C^*(x)$ satisfying the conditions of Claim 2. Then $W = W_{C^*}(w_l, x, w_1): t, x, w_1, w_2, w_3, \ldots, w_{l-1}, w_l, x, t'$.
Denote by $\phi(t, x, w_1)$ ($\phi(w_l,$ $x, t')$, respectively) the arc of $G$ with tail $x$ that is matched to $(t, x, w_1)$ ($(w_l, x, t')$, respectively) by a perfect matching of $K_{C^*}(w_l, x, w_1)$ satisfying (ii) in Claim 2. Let $\phi(x, w_1, w_2)$ denote the arc matched to $(x, w_1, w_2)$ in a perfect matching of $H_{C^*}(w_1)$ ($=H_{W}(w_1)$), and let $\phi(w_1, w_2, w_3)$, $\ldots$, $\phi(w_{l-1}, w_l, x)$ be interpreted similarly. Conditions (i) and (ii) in Claim 2 ensure that 
$$
xy = \phi(t, x, w_1), \phi(x, w_1, w_2), \phi(w_1, w_2, w_3), \ldots, \phi(w_{l-1}, w_l, x), \phi(w_l, x, t') = xv
$$
is a Hamilton path of $X(G)$ connecting $xy$ and $xv$. 
\qed
\end{proof}

\begin{lemma}
\label{lem:2}
Under the condition of Theorem \ref{th:oddlength}, for distinct $xy, uv \in A(G)$ with $x \ne u$, there exists a Hamilton path of $X(G)$ between $xy$ and $uv$. 
\end{lemma}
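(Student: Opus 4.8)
The plan is to follow the strategy of Lemma~\ref{lem:1}, the essential difference being that the two auxiliary pendant vertices are now attached to the \emph{two distinct} tails $x$ and $u$. As before, a Hamilton path of $X(G)$ from $xy$ to $uv$ will be read off from an open Eulerian trail of an auxiliary multigraph $G^{*}$ having exactly two vertices of odd degree: after attaching a pendant $t$ to $x$ and a pendant $t'$ to $u$, any open Eulerian trail runs $W: t, x, w_{1}, w_{2}, \ldots, w_{k}, u, t'$, and if each visit induced at an internal vertex is matched, via a perfect matching of the corresponding bipartite graph $H_{W}(\cdot)$ of Definition~\ref{def:h}, to an arc of $G$, while the first visit $(t,x,w_{1})$ is matched to $xy$ and the last visit $(w_{k},u,t')$ to $uv$, then $\phi(t,x,w_{1}), \phi(x,w_{1},w_{2}), \ldots, \phi(w_{k-1},w_{k},u), \phi(w_{k},u,t')$ is the desired Hamilton path.

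First I would build $G^{*}$. Assume $y \ne v$ and fix, using the hypothesis of Theorem~\ref{th:oddlength}, a path $P: y = x_{0}, x_{1}, \ldots, x_{l} = v$ of minimum odd length; in the generic case $x, u \notin V(P)$, obtain $G^{*}$ from $G$ by doubling every edge outside $E(P) \cup \{\{x,y\}, \{u,v\}\}$, tripling the edges of $E_{0}(P)$, and leaving $E_{1}(P)$ together with $\{x,y\}$ and $\{u,v\}$ single. A parity count gives $d^{*}(z)$ even for all $z \ne x, u$, while $d^{*}(x) = 2d(x)-1$ and $d^{*}(u) = 2d(u)-1$ are odd, so attaching $t$ to $x$ and $t'$ to $u$ by single edges makes all degrees even except at $t$ and $t'$; the resulting multigraph is connected, hence has open Eulerian trails from $t$ to $t'$. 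The tripling of $E_{0}(P)$ is exactly what makes each vertex of $P$ satisfy the hypothesis of Lemma~\ref{le2}, and the oddness of $l$ ensures this also holds at the endpoint $v$. The configurations where $x$ or $u$ lies on $V(P)$, or where $y$ or $v$ coincides with $x$ or $u$, are handled by the same kind of modification as in Case~2 of Lemma~\ref{lem:1}; the case $y = v$ needs no path and is treated by leaving $\{x,y\}$ and $\{u,y\}$ single.

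Next, among all open Eulerian trails $W$ of $G^{*}$ I would choose one minimizing the number $|Z(W)|$ of vertices $w \notin \{t,t',x,u\}$ for which $H_{W}(w)$ has no perfect matching, and prove $Z(W) = \emptyset$ exactly as in Claim~1 of Lemma~\ref{lem:1}: otherwise Lemma~\ref{le2} forces $d^{*}(w)=6$ with twin visits at some such $w$, whereupon a bow-tie or concatenation operation at $w$ — which changes $W$ only in a segment lying strictly between its first and last visits, hence preserves them — produces a trail with smaller $Z$, a contradiction. It then remains to arrange the matchings at $x$ and at $u$: one needs a perfect matching of the bipartite graph at $x$ (the analogue of $K_{C}(\cdot)$ of Definition~\ref{def:kl}, now with the single pendant $t$) sending the first visit to $xy$, and similarly one at $u$ sending the last visit to $uv$. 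Following the template of Claim~2 of Lemma~\ref{lem:1}, this is done by a Hall-type argument when $d(x), d(u) \ge 5$ and by explicit case analysis when $d(x)$ or $d(u)$ is $3$ or $4$, in each case possibly after a further bow-tie at $x$ or at $u$ (which can in particular change which neighbour $w_{1}$ of $x$, or which neighbour of $u$, the trail meets first); the extra multiplicities supplied by the tripling create the slack these adjustments need. When $x$ and $u$ are far apart the two matching problems are independent; solving them and then reading $\phi$ along $W$ gives the Hamilton path from $xy$ to $uv$.

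The main obstacle, as in Lemma~\ref{lem:1}, is Claim~2, and it is heavier here since the Eulerian trail must now be forced to behave correctly at two vertices. The delicate cases are the low-degree ones and the degenerate configurations in which $x$ and $u$ are adjacent, share a neighbour, or both lie on $P$, or in which $y$ or $v$ coincides with $x$ or $u$; there the matching analyses at $x$ and at $u$ no longer decouple, and one must verify that a single choice of trail meets both requirements, possibly tailoring $G^{*}$ — which edges stay single and which are tripled — so that enough room is available at both ends.
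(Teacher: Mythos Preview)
Your plan is the same overall strategy as the paper's --- auxiliary multigraph $G^{*}$ with exactly $x$ and $u$ of odd degree, pendants $t,t'$ attached to them, an open Eulerian trail $W:t,x,\ldots,u,t'$, and perfect matchings of each $H_{W}(\cdot)$ with the first visit sent to $xy$ and the last to $uv$ --- and the minimality argument for emptying $Z(W)$ carries over as you say.

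The one substantive difference is that you run the odd path between the \emph{heads} $y$ and $v$, while the paper runs it between the \emph{tails} $x$ and $u$. Your choice works in the generic case, but it forces a separate treatment of $y=v$, and the fix you propose there is wrong: leaving both $\{x,y\}$ and $\{u,y\}$ single while doubling every other edge gives $d^{*}(y)=2d(y)-2$, so $W$ makes only $d(y)-1$ visits to $y$, one fewer than $|A(y)|=d(y)$. No bijection between $W(y)$ and $A(y)$ can exist, and the sequence $\phi(\cdot)$ read along $W$ is one term too short to be a Hamilton path of $X(G)$. (A correct patch is to triple one of the two edges --- say single $\{x,y\}$, triple $\{u,y\}$ --- which restores $d^{*}(y)=2d(y)$ and puts $y$ back in the scope of Lemma~\ref{le2}.) The paper's choice of an odd path from $x$ to $u$, always available since $x\ne u$ is the standing hypothesis, sidesteps this entirely: its construction of $G^{*}$ is uniform across all five relative positions of $x,y,u,v$ and needs no special case for $y=v$. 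The paper also pins down the first and last visits in advance by choosing $x'\in N(x)\setminus\{y,x_{1}\}$ and $u'\in N(u)\setminus\{v,x_{l-1}\}$ and building $W$ with $(t,x,x')$ and $(u',u,t')$ as its endpoints, rather than adjusting by bow-tie afterwards; this streamlines the Claim~2 analogue (its Claim~3) at $x$ and $u$.
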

 
\begin{proof} 
We have five possibilities to consider: $x = v$ and $y = u$; $x, y, u, v$ are pairwise distinct; $x = v$ and $y \ne u$; $y = v$ and $x \ne u$; $y = u$ and $x \ne v$. The following treatment covers all of them. 

By our assumption there exists a path of odd length connecting $x$ and $u$ in $G$. Let 
\begin{equation}
\label{eq:P}
P: x=x_0, x_1,x_2, \ldots, x_{l-1}, x_{l}=u
\end{equation}
be such a path with shortest (odd) length $l\ge1$. (It may happen that $y=x_1$ and/or $v=x_{l-1}$.) Define $G^*$ to be the multigraph obtained from $G$
 by doubling each edge of $G$ outside of $P$ and tripling each edge $\{x_j,x_{j+1}\}$ for $j=1,3,\ldots, l-2$. 
 Then $d^*(x)=2d(x)-1$,
   $d^*(u)=2d(u)-1$ and
 $d^*(z)=2d(z)$ for $z \ne x,u$.   
 
Let  $G^*_{x,u}(t,t')$ be the multigraph obtained from $G^*$ by adding  two new vertices $t, t'$   and joining them to $x,u$ respectively by a single edge. Then all vertices of $G^*_{x,u}(t,t')$ except $t$ and $t'$ have even degrees in $G^*_{x,u}(t,t')$. Hence $G^*_{x,u}(t,t')$ has Eulerian trails connecting $t$ and $t'$.    
      
Since $\delta(G)\geq 3$, we can choose $x'$ to be a neighbour of $x$ other than $y$ and $x_1$, and $u'$ a neighbour of $u$ other than $v$ and $x_{l-1}$. In addition, if $d(x)=d(u) = 3$, $y=x_1$ and $v=x_{l-1}$, say, $N(x) = \{y, x', z\}$ and $N(u) = \{v, u', w\}$, then we can choose $x'$ and $u'$ in such a way that the edges $\{x,z\}$ and $\{u,w\}$ do not form an edge cut of $G$. In fact, if $\{\{x,z\},\{u,w\}\}$  is an edge cut of $G$ in this case, then since $G$ is assumed to be 2-edge connected, $G - \{\{x,z\}, \{u,w\}\}$ has two connected components, say, $G_0$ and $G_1$ with $z,w\in V(G_0)$ and $P$ in $G_1$. Since $x'$ is in $G_1$ and removal of $\{x, x'\}$ does not disconnect $G$, one can see that $\{\{x,x'\},\{u,w\}\}$ is not an edge-cut of $G$. Thus interchanging the roles of $x'$ and $z$ produces the desired $x'$ and $u'$. (In general, at most one of $x'$ and $u'$ lies on $P$ since $P$ is a path between $x$ and $u$ with minimum odd length.) 

With $x'$ and $u'$ as above, let
$$
W': t, x, x', \overbrace{x, x_1, x_2, \ldots, x_{l-1}, u}^{P}, u',u, t', 
$$ 
where $P$ is the path given in (\ref{eq:P}). Then $W'$ is a trail of $G^*_{x,u}(t,t')$.         
Let $W$ be an Eulerian trail of $G^*_{x,u}(t,t')$ obtained by extending $W'$ to cover
all edges of  $G^*_{x,u}(t,t')$  while maintaining $(t, x, x')$ and $(u',u, t')$
as its first and last visits respectively. Such a trail $W$ exists because removing the four edges in $(t, x, x')$ and  $(u',u, t')$ from $G^*_{x,u}(t,t')$ results in a connected multigraph with $x'$ and $u'$ as the only odd-degree vertices. In addition, 
if $d(x)= 3$ and $y=x_1$, say, $N(x)=\{y, x', z\}$, since $\{\{x,z\},\{u,w\}\}$  is not an edge cut of $G$ by our choices of $x'$ and $u'$, we can choose $W$ in such a way that   $(x',x,x_1)$ is a  visit induced by $W$; similarly,  we can choose $W$  such  that   $(u',u,x_{l-1})$ is a  visit induced by $W$,
if $d(u) = 3$  and $v=x_{l-1}$, say, $N(u)=\{v, u', w\}$.   (Such a $W$ can be constructed as follows: extend $W'$ to an Eulerian trail of the multigraph obtained by deleting the parallel edges between $x$ and $z$ and/or that between $u$ and $w$, and then insert the visits $(z, x, z)$ and/or $(w, u, w)$ to this trail.) 
 In this way we obtain an Eulerian trail $W$ of $G^*_{x,u}(t,t')$ such that 
\begin{itemize}
\item[(A)] $(t, x, x')$ and $(u',u, t')$ are its first and last visits, respectively; and 
\item[(B)] if $d(x)= 3$   and $y=x_1$, say, $N(x)=\{y, x', z\}$,  then  $(x',x,x_1)\in W(x)$; and,  if $d(u) = 3$ and $v=x_{l-1}$, say, $N(u)=\{v, u', w\}$, then  $(u',u,x_{l-1})\in W(x)$. 
\end{itemize}

Similar to Claim 1, one can show that there exists an Eulerian trail of $G^*_{x,u}(t,t')$, denoted by $W$ hereafter, satisfying (A), (B) and 
\begin{itemize}
\item[(C)] $H_W(z)$ has a perfect matching for every $z \in V(G) - \{x,u\}$. 
\end{itemize}

Note that $|W(z)| = |A(z)| = d(z)$ for every $z \in V(G)$. 

\medskip
\textbf{Claim 3.}~~There exists an Eulerian trail $W^*$ of $G^*_{x,u}(t,t')$ such that 
(i) $(t, x, x')$ and $(u',u, t')$ are its first and last visits, respectively;
(ii) $H_{W^*}(x)$ has a perfect matching under which $(t, x, x')$ is matched to $xy$;  
(iii) $H_{W^*}(u)$ has a perfect matching under which $(u',u, t')$ is matched to $uv$; and 
(iv) $H_{W^*}(z)$ has a perfect matching for every $z \in V(G) - \{x,u\}$.
 
\medskip
\textit{Proof of Claim 3.}~~Let $p = (t, x, x')$ denote the first visit of $W$, and let 
        $L_W(x) = H_W(x) -\{p, xy\}$ be the subgraph of $H_W(x)$ obtained by deleting vertices $p$ and $xy$. For $S\subseteq W(x)-\{p\}$, denote by $N_{L_W(x)}(S)$ the neighbourhood of $S$ in $L_W(x)$.
       
    \medskip 
Case (a):~ $y\ne x_1$. If  $d(x)\geq5$, then $|N_{L_W(x)}(S)|\geq |S|$ for any $S$, and so $L_W(x)$ contains a perfect matching by Hall's marriage theorem.
        
 Suppose $d(x)=4$. Then $|N_{L_W(x)}(S)| \geq |S|$ for every $S$ with $|S|=1$ or $3$. Suppose $|S|=2$ and $S=\{(a,x,b),(a',x,b')\}$. Then
           $N_{L_W(x)}(S)=  [(A(x)-\{xy\})-\{xa,xb\}] \cup   [(A(x)-\{xy\})-\{xa',xb'\}] 
          =  [(A(x)-\{xy\})]- (\{xa',xb'\}\cap \{xa,xb\})$. Thus, if $|\{xa',xb'\} \cap \{xa,xb\}| \le 1$, then $|N_{L_W(x)}(S)|\geq |S|$. If  $|\{xa',xb'\}\cap \{xa,xb\}| =2$, then 
       $\{a,b\}= \{a',b'\}$ and $\{x',x_1\}\cap \{a,b\}=\emptyset$, which implies $y\in \{a,b\}$ and $|N_{L_W(x)}(S)|= |(A(x)-\{xa,xb\}| =2$. Hence $L_W(x)$ contains a perfect matching by Hall's theorem. 
        
Suppose $d(x)=3$. Then $W(x) = \{p, (x',x,y), (y,x,x_1)\}$ or $W(x)= \{p,$ $(x',x,x_1), (y,x,y)\}$. 
In the former case $L_W(x)$ clearly has a perfect matching. In the latter case, apply the bow-tie operation to $W$ with respect to $(x',x,x_1)$ and $(y,x,y)$ to obtain a new  Eulerian trail $W_0$ such that $L_{W_0}(x)$ has a perfect matching.  
       
\medskip 
Case (b):~ $y=  x_1$. Similar to Case (a), if $d(x)\geq5$, then $L_W(x)$ has a perfect matching.   
       If  $d(x)=4$,   let $N(x)=\{x', x_{1},  z_1, z_2\}$. Then $|N_{L_W(x)}(S)|\geq |S|$ 
        unless $S= \{(z_1,x,z_2), [z_1,x,z_2]\}$. In this exceptional case, $W(x)= \{p, (x',x,x_1),$ $(z_1,x,z_2), [z_1,x,z_2]\}$, and we apply the bow-tie operation 
       to $W$ with respect to $(x',x,x_1)$ and $(z_1,x,z_2)$ to obtain a new Eulerian trail $W_0$. One can show that $L_{W_0}(x)$ has a perfect matching.  
       
If  $d(x)=3$,  let $N(x)=\{x', x_{1}, z\}$. By (B), $(x',x,x_1)$ is a
visit to $x$ induced by $W$. Hence $W(x)= \{p, (x',x,x_1), (z,x,z)\}$ and $L_W(x)$ has a perfect matching. 

So far we have proved that there exists an Eulerian trail $W_1$ of $G^*_{x,u}(t,t')$ (which is either $W$ or $W_0$) satisfying (A) such that $L_{W_1}(x)$ has a perfect matching. This matching together with the edge between $(t, x, x')$ and $xy$ is a perfect matching of $H_{W_1}(x)$. Moreover, since $W$ satisfies (C), from the proof above one can see that $W_1$ satisfies (C) as well. If $H_{W_1}(u)$ has a perfect matching which matches $(u',u, t')$ to $uv$, then set $W^* = W_1$ and we are done. Otherwise, beginning with $W_1$ and using similar arguments as above, we can construct an Eulerian trail $W^*$ of $G^*_{x,u}(t,t')$ satisfying all requirements in Claim 3. This completes the  proof of Claim 3.  
       
Similar to the proof of Lemma \ref{lem:1}, we can show that the Eulerian trail $W^*$ in Claim 3 produces a Hamilton path in $X(G)$ connecting $xy$ and $uv$.  
\qed
\end{proof} 

\bigskip
\begin{proof}\textbf{of Theorem \ref{th:oddlength}}~~This follows from Lemmas \ref{lem:1} and \ref{lem:2} immediately.  
\qed
\end{proof}

In the proof of Theorem \ref{th:ham-c} we will use the following lemma which may be known in the literature. We give its proof since we are unable to allocate a reference. 

\begin{lemma}
\label{le:ham-c}
In any Hamilton-connected graph with at least four vertices, there exists a path of odd length connecting any two distinct vertices.
\end{lemma}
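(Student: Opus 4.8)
The plan is to split on the parity of $n = |V(G)|$ and to exploit the elementary fact that a Hamilton path has $n-1$ edges whereas the two arcs of a Hamilton cycle joining two fixed vertices have lengths summing to $n$. Let $a, b$ be distinct vertices of the Hamilton-connected graph $G$.

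First I would dispose of the case $n$ even: since $G$ is Hamilton-connected, there is a Hamilton path $P$ from $a$ to $b$, and $P$ has length $n - 1$, which is odd. So $P$ itself is the required odd path.

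Next, suppose $n$ is odd (so $n \ge 5$ since $n \ge 4$). Here the Hamilton $a$--$b$ path has even length, so I switch to a Hamilton cycle. I would first note that $G$ is Hamiltonian: pick any edge $\{x,y\}$ of $G$, take a Hamilton path from $x$ to $y$, observe that since $n \ge 3$ the first edge of this path leads from $x$ to a vertex distinct from $y$, so the path does not already use $\{x,y\}$; adjoining $\{x,y\}$ therefore produces a Hamilton cycle $H$. On $H$ the vertices $a$ and $b$ are joined by two internally disjoint arcs, of lengths $d$ and $n-d$ respectively, and $d + (n-d) = n$ is odd, so exactly one of $d$, $n-d$ is odd; the corresponding arc of $H$ is a path of odd length between $a$ and $b$.

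The whole argument is thus a short piece of parity bookkeeping, once one has at hand a Hamilton path (when $n$ is even) or a Hamilton cycle (when $n$ is odd). I do not anticipate any real obstacle; the only points needing a sentence of care are the standard deduction that Hamilton-connectedness implies Hamiltonicity and the remark that the Hamilton path used to form the cycle does not already contain the chosen edge.
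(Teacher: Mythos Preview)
Your argument is correct and takes a genuinely different route from the paper's proof. The paper also splits on parity, but when the Hamilton path $P: u=x_0,\ldots,x_n=v$ has even length it does not pass to a Hamilton cycle. Instead it observes that a bipartite graph on more than two vertices cannot be Hamilton-connected, so the bipartition $\{x_0,x_2,\ldots\}\cup\{x_1,x_3,\ldots\}$ induced by $P$ must be violated by some chord $\{x_i,x_j\}$ with $i\equiv j\pmod 2$; the shortcut $x_0,\ldots,x_i,x_j,\ldots,x_n$ is then the desired odd path.

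Your approach trades the ``not bipartite $\Rightarrow$ chord of the right parity'' step for the equally light ``Hamilton-connected $\Rightarrow$ Hamiltonian'' step, and then reads the odd path off the cycle directly. Both are short; yours is perhaps a shade more transparent since the parity of the two arcs of a cycle of odd length is immediate, whereas the paper's chord argument requires a moment's thought about the length of the shortcut. One minor point of exposition: your justification that the Hamilton $x$--$y$ path avoids the edge $\{x,y\}$ would read more cleanly if you simply noted that in such a path $x$ and $y$ each have degree one, so $\{x,y\}$ could only occur as the unique edge of a path of length one, impossible when $n\ge 3$.
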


\begin{proof}
Let  $G$ be such a graph. Then for any distinct $u, v \in V(G)$ there exists a Hamilton path $P: u=x_0, x_1,x_2, \ldots, x_{n-1}, x_{n}=v$, where $n = |V(G)|-1$. It suffices to consider the case when $n$ is even. Denote $A=\{x_0, x_2, \ldots, x_{n}\}$ and  $B=\{x_1, x_3, \ldots, x_{n-1}\}$. Since $\{A, B\}$ is a partition of $V(G)$ and any bipartite graph other than $K_2$ is not Hamilton-connected, there exist adjacent vertices $x_i, x_j$ both in $A$ or $B$, where $j \ge i+2$. Thus $x_0, x_1, \ldots,x_{i-1}, x_{i}, x_{j}, x_{j+1},\ldots, x_{n}$ is a path of odd length between $u$ and $v$.
\qed
\end{proof}

\begin{proof}\textbf{of Theorem \ref{th:ham-c}}~~It can be verified that any Hamilton-connected graph with at least four vertices is 2-edge connected and has minimum degree at least three. Hence Theorem \ref{th:oddlength} and Lemma \ref{le:ham-c} together imply that the 3-arc graph of such a graph is Hamilton-connected (with more than four vertices). Applying this iteratively, we obtain Theorem \ref{th:ham-c}. 
\qed
\end{proof}

\section*{Acknowledgements}
Guangjun Xu was supported by the MIFRS and SFS scholarships of the University 
of Melbourne. Sanming Zhou was supported by a Future Fellowship (FT110100629) of the Australian Research Council.

% BibTeX users please use one of
%\bibliographystyle{spbasic}      % basic style, author-year citations
%\bibliographystyle{spmpsci}      % mathematics and physical sciences
%\bibliographystyle{spphys}       % APS-like style for physics
%\bibliography{}   % name your BibTeX data base

\begin{thebibliography}{}
%
% and use \bibitem to create references. Consult the Instructions
% for authors for reference list style.
%
% \bibitem{RefJ}
% Format for Journal Reference
%Author, Article title, Journal, Volume, page numbers (year)
% Format for books
%  \bibitem{RefB}
%  Author, Book title, page numbers. Publisher, place (year)
% etc


\bibitem{AQ}
B.~Alspach, Y.~S.~Qin, Hamilton-connected Cayley graphs on Hamiltonian groups, {\em Europ. J. Combin.} {\bf 22} (2001), 777--787. 

\bibitem{bmg}
C. Balbuena,  L. P.  Montejano, P.   Garc{\'{\i}}a-V{\'a}zquez,
On the connectivity and restricted edge-connectivity of 3-arc graphs,
In {\em Proceedings of   the 3rd International Workshop on Optimal Networks Topologies, 
(IWONT 2010)},  Barcelona, Spain, 9-11 June, 2010,  pp. 79--90, Barcelona, Spain, 2011. Iniciativa Digital Polit{\`e}cnica.

\bibitem{Bondy}
J.~A.~Bondy, Basic graph theory -- paths and cycles, in: Handbook of Combinatorics,Vol. I, pp. 5--110, Elsevier, Amsterdam, 1995

\bibitem{BM}
J.~A.~Bondy and U.~S.~R.~Murty, Graph Theory, Springer, New York, 2008.  

%\bibitem{BH} 
%H.~J.~Broersma and C.~Hoede, Path graphs, {\em J. Graph Theory}
%{\bf 13} (1989), 427--444. 

\bibitem{chart}
G.~Chartrand, On Hamiltonian line-graphs,  {\em Trans. Amer. Math. Soc.} {\bf 134} (1968),  559--566.

\bibitem{CLXYZ}
Z-H.~Chen, H-J.~Lai, L.~Xiong, H.~Yan and M.~Zhan, Hamilton-connected indices of graphs,
{\em Discrete Math.} {\bf 309} (2009), 4819--4827.

\bibitem{CQ}
C.~C.~Chen and N.~Quimpo, On strongly Hamiltonian abelian group graphs, in: Combinatorial
Mathematics VIII, K. L. McAvaney (ed), Lecture Notes in Mathematics, 884, Springer, Berlin,
1981, pp. 23--34.

\bibitem{Diestel}
R.~Diestel, Graph Theory, Springer, New York, 4th edition, 2010.

%\bibitem{Garey-Johnson}
%M.~R.~Garey, D.~S.~Johnson and L. Stockmeyer, Some simplified NP-complete graph %problems, {\em Theoret. Comput. Sci.} {\bf 1} (1976), 237--267.

\bibitem{Gardiner-Praeger-Zhou99}
A.~Gardiner, C.~E.~Praeger and S.~Zhou, Cross-ratio graphs, {\em
J. London Math. Soc.} (2) {\bf 64} (2001), 257--272.

\bibitem{Gould}
R.~J.~Gould, Advances on the Hamiltonian problem ?a survey, {\em Graphs and Combinatorics} {\bf 19} (2003), 7--52.

%\bibitem{HE} 
%C. C. Harner and R. C. Entringer, Arc colorings of digraphs, {\em J.  Combin. Theory
%B} {\bf 13} (1972), 219--225.

\bibitem{HTW}
Z.~Hu, F.~Tian and B.~Wei, Hamilton connectivity of line graphs and claw-free graphs,
{\em J. Graph Theory} {\bf 50} (2005), 130--141.

\bibitem{MPZ}
M.~A.~Iranmanesh, C.~E.~Praeger and S.~Zhou, Finite symmetric
graphs with two-arc transitive quotients, {\em J. Combin. Theory
(Ser. B)} {\bf 94} (2005), 79--99.


\bibitem{KXZ}
M.~Knor, G.~ Xu and S.~Zhou,  A study of 3-arc graphs,
{\em Discrete Appl. Math.} {\bf 159} (2011), 344--353.

\bibitem{KZ}
M.~Knor and S.~Zhou, Diameter and connectivity of $3$-arc graphs,
{\em Discrete Math.} {\bf 310} (2010), 37--42.

\bibitem{KM}
K.~Kutnar and D.~Maru\v{s}i\v{c}, Hamilton cycles and paths in vertex-transitive graphs -- current directions, {\em Discrete Math.} {\bf 309} (2009), 5491--5500.

\bibitem{LLZ}
D.~Li, H-J.~Lai and M.~Zhan, Eulerian subgraphs and Hamilton-connected line graphs,
{\em Discrete Appl. Math.} {\bf 145} (2005), 422--428.
 
\bibitem{Li-Praeger-Zhou98}
C.~H.~Li, C.~E.~Praeger and S.~Zhou, A class of finite symmetric
graphs with 2-arc transitive quotients, {\em Math. Proc. Cambridge
Phil. Soc.} {\bf 129} (2000), 19--34.

%\bibitem{LL}
%H.~E.~Li and Y.~X.~Lin, On the characterization of path graphs,
%{\em J. Graph Theory} {\bf 17} (1993), 463--466. 

\bibitem{LZ}
Z.~Lu and S.~Zhou, Finite symmetric graphs with $2$-arc transitive
quotients (II), {\em J. Graph Theory}, {\bf 56} (2007),
167--193.

%\bibitem{O}
%O.~Ore, {\em Theory of Graphs}, American Mathematical Society, Providence, %1962, pp.206.

%\bibitem{P}
%C.~Payan, Sur le nombre d'absorption d'un graphe simple, 
%{\em Cahiers Centre \'{E}tudes Recherche Op\'{e}r.} {\bf 17} (1975), 307--317.

\bibitem{Praeger}
C.~E.~Praeger, Finite symmetric graphs, in: Topics in Algebraic Graph Theory, Cambridge University Press, Cambridge, 2004, pp.179--202.

\bibitem{Thom}
C.~Thomassen, Reflections on graph theory, {\em J. Graph Theory} {\bf 10} (1986), 309--324.

\bibitem{Thom1}
C.~Thomassen, Tilings of the torus and the Klein bottle and vertex-transitive graphs on a fixed surface, {\em Trans. Amer. Math. Soc.} {\bf 323} (1991), 605--635.

\bibitem{Zhan05}
M.~Zhan, Hamiltonicity of 6-connected line graphs,
{\em Discrete Appl. Math.} {\bf 158} (2010), 1971--1975.

\bibitem{Zhou00c}
S.~Zhou, Constructing a class of symmetric graphs, {\em European
J. Combin.} {\bf 23} (2002), 741--760.

\bibitem{Zhou99}
S.~Zhou, Imprimitive symmetric graphs, $3$-arc graphs and
$1$-designs, {\em Discrete Math.} {\bf 244} (2002), 521--537.

\bibitem{Zhou98}
S.~Zhou, Almost covers of $2$-arc transitive graphs, {\em
Combinatorica} {\bf 24} (2004), 731-745. [Erratum: {\bf 27}
(2007), 745--746.]



\end{thebibliography}

% Non-BibTeX users please use

\end{document}